\documentclass[11pt]{article}
%
%


\usepackage{amssymb}

\usepackage{verbatim}
\usepackage{latexsym,amsfonts}
\usepackage{amsmath}
\usepackage{times}
\usepackage{xcolor}
\usepackage{lipsum}

\newcommand*{\qed}{\hfill\ensuremath{\blacksquare}}

\newcommand*{\qedc}{\hfill\ensuremath{\Box}}

\newcommand\blfootnote[1]{%
	\begingroup
	\renewcommand\thefootnote{}\footnote{#1}%
	\addtocounter{footnote}{-1}%
	\endgroup
}

\newtheorem{theorem}{Theorem}[section]

\newtheorem{lemma}[theorem]{Lemma}

\newtheorem{remark}[theorem]{Remark}

\usepackage{lipsum} 
\newenvironment{proof}[1][Proof]{\noindent\textbf{#1.} }{\ \rule{0.5em}{0.5em}}
\hoffset -20mm \topmargin= -1cm \raggedbottom
\setlength{\textwidth}{17.3cm} \setlength{\textheight}{21.82cm}

\begin{document}

\title{On existence and nonexistence of isoperimetric inequality with differents monomial weights}

\author{Emerson Abreu\thanks{Email: eabreu@ufmg.br}\ \ \&\ Leandro G. Fernandes Jr\thanks{Email: leandrogonzaga@ufmg.br}\\
Departamento de Matem\'atica\\
Universidade Federal de Minas Gerais\\
\bigskip
CP702 - 30123-970 Belo Horizonte-MG, Brazil }
        \date{\vspace{-5ex}}
\maketitle

\begin{abstract}
We consider the monomial weight $x^{A}=\vert x_{1}\vert^{a_{1}}\ldots\vert x_{N}\vert^{a_{N}}$, where $a_{i}$ is a nonnegative real number for each $i\in\{1,\ldots,N\}$, and we establish the existence and nonexistence of isoperimetric inequalities with different monomial weights. We study positive minimizers of $\int_{\partial\Omega}x^{A}\mathcal{H}^{N-1}(x)$ among all smooth bounded open sets $\Omega$ in $\mathbb{R}^{N}$ with fixed Lebesgue measure  with monomial weight $\int_{\Omega}x^{B}dx$.

\end{abstract}
\noindent Key words: Isoperimetric inequality, Sobolev Inequality, monomial weights. \\
\noindent AMS Subject Classification: {26D20, 35B33, 46E30, 46E35}

\section{Introduction and main results}
\label{intro}

\blfootnote{This study was financed by the Coordena\c c\~ao de Aperfei\c coamento de Pessoal de N\'{\i}vel Superior - Brasil (CAPES) and Funda\c c\~ao de Apoio \`a Pesquisa do Estado de Minas Gerais - (FAPEMIG)}

A great attention has been given recently to the isoperimetric inequalities with weights, see for instance \cite{Alvino}, \cite{Alvino2}, \cite{Alvino3}, \cite{Betta}, \cite{Betta1}, \cite{Boyer}, \cite{Brock}, \cite{Cabre1}, \cite{Cabre2}, \cite{Cabre-Ros-Oton},  \cite{CARROLL}, \cite{Csato}, \cite{Csato1}, \cite{Dahlberg}, \cite{Di Giosia}, \cite{Fusco}, \cite{Rosales}
and the references therein. However, in the wide literature, most works approach volume functional and perimeter functional carrying the same weight.

It is worth emphasizing that some researchers have been studying isoperimetric inequalities when the volume and perimeter carry two different weights, see \cite{Alvino}, \cite{Alvino2}, and \cite{Csato1}. In \cite{Alvino}, motivated by some norm inequalities with weights which are well-known as Caffarelli-Kohn-Niremberg (see \cite{Caffarelli}), it was studied by Alvino et al., the following isoperimetric inequality:
\begin{align}\label{0.0.0.1.0}
	\text{minimize} \, \ \int_{\partial\Omega}\vert x\vert^{k}\mathcal{H}^{N-1}(x) \, \ \text{among all smooth sets} \, \ \Omega\subset\mathbb{R}^{N} \, \ \text{satisfying} \, \ \int_{\Omega} \vert x\vert^{l}dx=1.
	\end{align} 
	
The existence of an isoperimetric inequality with monomial weights was shown by  Cabr\'e, and Ros-Oton, see Theorem 1.4 in \cite{Cabre-Ros-Oton}, namely

\textbf{Theorem A (Cabr\'e-Ros-Oton)} Let $A=(a_{1},\ldots, a_{N})$ be a nonnegative vector in $\mathbb{R}^{N}$, $x^{A}=\vert x_{1}\vert^{a_{1}}\ldots\vert x_{N}\vert^{a_{N}}$,\linebreak $D=a_{1}+\cdots+a_{N}+N$, and $\mathbb{R}_{A}^{N}=\left\{(x_{1},\ldots, x_{N}); x_{i}>0 \, \ \text{whenever}\, \ a_{i}>0\right\}$. Let $\Omega\subset\mathbb{R}^{N}$ be a bounded Lischitz domain. Denote
\begin{align*}
	m(\Omega)=\int_{\Omega} x^{A}dx \, \, \text{and} \, \, P(\Omega)=\int_{\partial\Omega}x^{A}d\mathcal{H}^{N-1}(x).
\end{align*}
Then, \begin{align}\label{0.0.0.1}
	\dfrac{P(\Omega)}{m(\Omega)^{\frac{D-1}{D}}}\geq\dfrac{P(B_{1}^{A})}{m(B_{1}^{A})^{\frac{D-1}{D}}},
\end{align}
where $B_{1}^{A}:=B_{1}(0)\cap \mathbb{R}_{A}^{N}$. 

As in the classical case, the inequality (\ref{0.0.0.1}) implies the following Sobolev Inequality with monomial weights
\begin{align}\label{0.0.0.2}
	\left(\int_{\mathbb{R}_{A}^{N}}\vert u\vert^{p^{\star}}x^{A}dx\right)^{\frac{1}{p^{\star}}}\leq C_{p,N}	\left(\int_{\mathbb{R}_{A}^{N}}\vert \nabla u\vert^{p}x^{A}dx\right)^{\frac{1}{p}},
\end{align}
for every $u\in C_{c}^{1}(\Omega)$, where $p^{\star}=\frac{pD}{D-p}$, and $p<D$. The best constant in (\ref{0.0.0.2}) is given by

\begin{align*}
	C_{1}=D\left(\frac{\Gamma\left(\frac{a_{1}+1}{2}\right)\cdots\Gamma\left(\frac{a_{N}+1}{2}\right)}{2^{k}\Gamma\left(1+\frac{D}{2}\right)}\right)^{\frac{1}{D}}\, \  \text{for}\, \ p=1
\end{align*}

and by

 \begin{align*}
	C_{p, N}=C_{1}D^{\frac{1}{D}-1-\frac{1}{p}}\left(\frac{p-1}{D-p}\right)^{\frac{1}{p'}}\left(\frac{p'\Gamma(D)}{\Gamma\left(\frac{D}{p}\right)\Gamma\left(\frac{D}{p'}\right)}\right)^{\frac{1}{D}}, \, \ \text{for} \, \ 1<p<D,
\end{align*} 
where $p=\frac{p}{p-1}$, and $k$ is the number of strictly positive entries of $A$.

Additionally, the best constant $C_{p,N}$ gives the possibility to prove a Trudinger-Moser type inequality, more especially, that there exists constants $c_{1}>0$ and $c_{2}>0$ such that
\begin{align*}
\int_{\Omega}\exp{\left[\left(\frac{c_{1}\vert u(x)\vert}{\Vert\nabla u\Vert_{L^{D}(\Omega, x^{A}dx)}}\right)^{\frac{D}{D-1}}\right]}x^{A}dx\leq c_{2}\int_{\Omega}x^{A}dx	
\end{align*}
where $\Omega\subset\mathbb{R}^{N}$ is a bounded open set.

Motivated by inequality (\ref{0.0.0.2}) and the Caffarelli-Kohn-Niremberg inequality, Castro presented in \cite{Castro} the following result

\textbf{Theorem B (Castro)} Consider $N\geq 1$, $p\geq 1$, $F=(f_{1},\ldots, f_{N})$, $G=(g_{1},\ldots, g_{N})\in \mathbb{R}^{N}$. Let $f=f_{1}+\cdots+f_{N}$ and $g=g_{1}+\cdots+g_{N}$, for $p^{\ast}\geq 1$ defined by
\begin{align*}
	\dfrac{1}{p^{\ast}}+\dfrac{g+1}{N}=\dfrac{1}{p}+\dfrac{f}{N},
\end{align*}
suppose \begin{enumerate}
\item [1.] $\dfrac{1}{p^{\ast}}f_{i}+\left(1-\dfrac{1}{p}\right)g_{i}>0$ for all $i=1,\ldots,N$,
\item [2.] $0\leq f_{i}-g_{i}<1$ for all $i=1,\ldots,N$.
\item [3.] $1-\dfrac{N}{p}<f-g\leq 1$.
\end{enumerate}
Then there exists a constant $C>0$ such that for all $u\in C_{c}^{1}\left(\mathbb{R}^{N}\right)$
\begin{align*}
	\left(\int_{\mathbb{R}^{N}}\vert x^{G} u(x)\vert^{p^{\ast}}dx\right)^{\frac{1}{p^{\ast}}}\leq C\left(\int_{\mathbb{R}^{N}}\vert x^{F}\nabla u(x)\vert^{p}\right)^{\frac{1}{p}}.
\end{align*}

For $p=1$, we may rewrite the previous result as:

\textit{The following three conditions 
\begin{enumerate}
	\item [i)]$a_{i}>0$,
	\item [ii)] $0\leq a_{i}-\frac{N+a-1}{N+b}b_{i}<1$,
	\item [iii)] $a-b\leq 1$.
\end{enumerate}
are sufficient for the existence of a constant $C>0$, that depends only on $a$, $b$, and $N$, such that
\begin{align*}
	\left(\int_{\mathbb{R}^{N}} x^{B}\vert u(x)\vert^{\frac{N+b}{N+a-1}}dx\right)^{\frac{N+a-1}{N+b}}\leq C\int_{\mathbb{R}^{N}}x^{A}\vert\nabla u(x)\vert dx,
\end{align*}
for every $u\in C_{c}^{1}\left(\mathbb{R}^{N}\right)$.}

Motivated by Theorem B and problem $(\ref{0.0.0.1.0})$, we approach the existence and nonexistence of isoperimetric inequality where the volume and perimeter have different monomial weights, more especific, we study the following isoperimetric problem:

Find the constant $C_{A,B,N}\in [0,+\infty)$, where \begin{align}\label{0.0.0.8}
C_{A,B,N}:=\inf\left\{\dfrac{\displaystyle\int_{\partial\Omega}x^{A}d\mathcal{H}^{N-1}(x)}{\left[\displaystyle\int_{\Omega}x^{B}dx\right]^{\frac{N+a-1}{N+b}}}; \Omega \, \ \text{is a smooth open set and} \, \ 0<\int_{\Omega}x^{B}dx<\infty\right\}.
\end{align}  

Even though some cases in one dimension are included, throughout the paper we consider $N\geq2$.  For the case $N=1$ see \cite{Alvino2}. One of our main results is:

\begin{theorem}\label{0.0.0.9} Consider $N\geq 2$. Let $A=(a_{1},\ldots,a_{N})$, $B=(b_{1},\ldots, b_{N})\in\mathbb{R}^{N}$ be two nonnegative vectors. Consider $a=a_{1}+\cdots+a_{N}$, $b=b_{1}+\cdots+b_{N}$, $\overline{a}_{i}=a-a_{i}$, and $\overline{b}_{i}=b-b_{i}$. Then we have the following
\begin{enumerate}	
\item[$(I)$] if \begin{align*} C_{A,B,N}>0, \end{align*} 
 then 
 
 \begin{align}\label{0.0.0.9.0}
 0\leq a_{i}-\frac{N+a-1}{N+b}b_{i}\leq\frac{N+a-1}{N+b}
 \end{align}
 or equivalently  
  \begin{align}\label{0.0.0.9.1} 0\leq a_{i}-\frac{N+\overline{a}_{i}-1}{N+\overline{b}_{i}}b_{i}\, \, \  \text{and} \, \, \  \frac{a_{i}}{b_{i}+1}\leq\frac{N+\overline{a}_{i}-1}{N+\overline{b}_{i}-1}.
 \end{align}
 \item [$(II)$] if $a-b\leq 1$ and the condition $(\ref{0.0.0.9.0})$ holds, then
 \begin{align*}
 	C_{A,B,N}>0.
 \end{align*} 
\end{enumerate}

\end{theorem}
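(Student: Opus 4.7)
The plan is to treat the two implications independently and then verify the algebraic equivalence of the two formulations of the necessary condition.

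\textbf{Part (I), necessity.} Both bounds follow from testing the quotient defining $C_{A,B,N}$ on one-parameter families of axis-parallel boxes kept away from the coordinate hyperplanes. Fix an index $i$ and a bounded open box $Q=\prod_{j\neq i}(\alpha_j,\beta_j)$ with all $\alpha_j>0$. For the upper half of (\ref{0.0.0.9.0}), shrink a thin slab
\begin{equation*}
\Omega_\varepsilon=\bigl\{x\in\mathbb{R}^N:(x_j)_{j\neq i}\in Q,\ 0<x_i<\varepsilon\bigr\},\qquad \varepsilon\to 0^+.
\end{equation*}
A direct Fubini computation gives $\int_{\Omega_\varepsilon}x^B\,dx\asymp\varepsilon^{b_i+1}$. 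On $\partial\Omega_\varepsilon$, if $a_i>0$ the face $\{x_i=0\}$ contributes $0$, the face $\{x_i=\varepsilon\}$ contributes $\asymp\varepsilon^{a_i}$, and the $2(N-1)$ lateral faces contribute $\asymp\varepsilon^{a_i+1}$; hence $\int_{\partial\Omega_\varepsilon}x^A\,d\mathcal{H}^{N-1}\asymp\varepsilon^{a_i}$. The quotient thus scales as $\varepsilon^{\,a_i-(b_i+1)(N+a-1)/(N+b)}$, and its not vanishing as $\varepsilon\to 0^+$ forces the exponent to be $\leq 0$ (when $a_i=0$ the quotient blows up, so no constraint arises, consistently with the statement).

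For the lower half of (\ref{0.0.0.9.0}), push the box off to infinity along the $i$th axis by taking $\Omega_R=Q\times(R,R+1)$ with $R\to\infty$. Now $x_i\asymp R$ throughout $\Omega_R$, so $\int_{\Omega_R}x^B\,dx\asymp R^{b_i}$ and every face of $\partial\Omega_R$ contributes $\asymp R^{a_i}$. The quotient scales as $R^{\,a_i-b_i(N+a-1)/(N+b)}$, and remaining bounded below as $R\to\infty$ forces the exponent $\geq 0$. The equivalence of (\ref{0.0.0.9.0}) and (\ref{0.0.0.9.1}) is then a straightforward cross-multiplication using $\overline{a}_i=a-a_i$ and $\overline{b}_i=b-b_i$.

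\textbf{Part (II), sufficiency.} The natural route is to invoke the $p=1$ form of Theorem~B (Castro) as recalled above. Its hypotheses are $a_i>0$, $0\leq a_i-\tfrac{N+a-1}{N+b}b_i<1$ and $a-b\leq 1$, which we verify under our assumptions: $a-b\leq 1$ gives $\tfrac{N+a-1}{N+b}\leq 1$, so the upper half of (\ref{0.0.0.9.0}) yields $a_i-\tfrac{N+a-1}{N+b}b_i\leq\tfrac{N+a-1}{N+b}\leq 1$. Applying the resulting weighted Sobolev inequality to a mollification of $\chi_\Omega$ and using the standard BV identification
\begin{equation*}
\int_{\mathbb{R}^N}x^A\,|\nabla\chi_\Omega|=\int_{\partial\Omega}x^A\,d\mathcal{H}^{N-1}
\end{equation*}
for smooth bounded $\Omega$, one obtains $C_{A,B,N}>0$ after passage to the limit.

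The main obstacle is handling the degenerate/endpoint cases that fall outside the open hypotheses of Castro's theorem. First, if $a_i=0$ for some $i$ then the lower half of (\ref{0.0.0.9.0}) forces $b_i=0$, so the weight ignores the $i$th variable; one then integrates that variable out via Fubini to reduce to an $(N-1)$-dimensional statement, or equivalently perturbs $a_i$ to $\eta>0$, applies Castro, and sends $\eta\to 0$ by dominated convergence. Second, when $a-b=1$ and $a_i-\tfrac{N+a-1}{N+b}b_i=1$ occur simultaneously, we sit on the closed boundary of Castro's range and must approach $A$ by a sequence $A^{(n)}$ from its strict interior, using continuity/monotonicity of the functional in the exponent vector to transfer the uniform constant. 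Past these endpoint considerations, the argument is standard.
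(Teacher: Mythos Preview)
Your Part~(I) is correct and in fact more elementary than the paper's argument. The paper obtains the lower bound $a_i-\frac{N+a-1}{N+b}b_i\ge 0$ by estimating the quotient on translated balls $B(te_i,1)$ as $t\to\infty$ (via the area formula), and the upper bound by analyzing a family of cone-like regions $\Omega_\varepsilon=\{x\in B_R^+: x_i<\varepsilon|\overline{x}_i|\}$ as $\varepsilon\to 0$, which requires splitting $\partial\Omega_\varepsilon$ into several pieces and estimating each separately. Your axis-parallel box families recover the same leading-order scaling with one-line Fubini computations, so you buy considerable simplification. (One small inconsistency: you announce boxes ``kept away from the coordinate hyperplanes'' but your thin slabs $Q\times(0,\varepsilon)$ touch $\{x_i=0\}$; this is harmless for the argument but should be worded accordingly, and you need a word about smoothing the corners.)

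Part~(II), however, has a genuine gap at the endpoint $a-b=1$ with $a_j-b_j=1$ for some $j$ (which by the condition forces $a_i=b_i$ for $i\neq j$). Approaching $A$ by $A^{(n)}$ in the strict interior of Castro's range only yields a sequence of positive constants $C_n$, and your appeal to ``continuity/monotonicity of the functional in the exponent vector'' does not rule out $C_n\to 0$: the infimum $C_{A,B,N}$ is not a priori continuous in $A$, and this is precisely the borderline where it might degenerate. The paper closes this case not by perturbation but by a direct, sharp one-variable integration by parts (this is the lower bound in Theorem~\ref{0.0.0.10}): for $v\in C_c^1(\mathbb{R})$, $v\ge 0$,
\[
\int_{\mathbb{R}}|y|^{b_j}v(y)\,dy=-\frac{1}{b_j+1}\int_{\mathbb{R}}|y|^{b_j}y\,v'(y)\,dy\le\frac{1}{b_j+1}\int_{\mathbb{R}}|y|^{b_j+1}|v'(y)|\,dy=\frac{1}{a_j}\int_{\mathbb{R}}|y|^{a_j}|v'(y)|\,dy,
\]
and applying this in the $x_j$-variable to $u(\cdot)$ with the remaining variables frozen, then integrating against $\overline{x}_j^{\overline{A}_j}=\overline{x}_j^{\overline{B}_j}$, gives $\int x^B|u|\le a_j^{-1}\int x^A|\nabla u|$. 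Combined with Lemma~\ref{3.0.4.0} this yields $C_{A,B,N}\ge a_j>0$ with an explicit constant. This is the missing ingredient in your sketch. Your proposed dimension-reduction for the $a_i=0=b_i$ case is also not quite justified as stated (Fubini does not straightforwardly collapse an $N$-dimensional Sobolev inequality to an $(N-1)$-dimensional one), though the paper's own proof is equally terse on this point.
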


For the case $a-b=1$, on certain conditions, we present the exactly value of $C_{A,B, N}$.
\begin{theorem}\label{0.0.0.10} Consider $N\geq 2$.
	Let $A=(a_{1},\ldots,a_{N})$, $B=(b_{1},\ldots, b_{N})\in\mathbb{R}^{N}$ be two nonnegative vectors. Consider $a=a_{1}+\cdots+a_{N}$, $a=b_{1}+\cdots+b_{N}$, $\overline{a}_{i}=a-a_{i}$, and $\overline{b}_{i}=b-b_{i}$. If $a_{j}=b_{j}$ for all $j\in\{1,\ldots, N\}\backslash\{i\}$, and $a_{i}=b_{i}+1$, then
	\begin{align*}
	C_{A,B, N}=a_{i}.
	\end{align*} 
\end{theorem}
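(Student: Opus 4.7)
\noindent\textbf{Proof plan for Theorem \ref{0.0.0.10}.}
Under the assumptions $a_j=b_j$ for $j\neq i$ and $a_i=b_i+1$, one has $a-b=1$, so $(N+a-1)/(N+b)=1$ and the quotient defining $C_{A,B,N}$ becomes linear:
\[
C_{A,B,N}=\inf_{\Omega}\frac{\int_{\partial\Omega}x^A\,d\mathcal{H}^{N-1}(x)}{\int_{\Omega}x^B\,dx}.
\]
This scale-invariant linear form strongly suggests a calibration (divergence-theorem) argument for the lower bound, together with a simple slab construction for the upper bound.

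For $C_{A,B,N}\geq a_i$, I introduce the vector field
\[
V(x)=\frac{\operatorname{sgn}(x_i)}{a_i}\,x^A\,e_i.
\]
Since $a_i=b_i+1\geq 1$, the scalar factor $\operatorname{sgn}(x_i)|x_i|^{a_i}/a_i$ is $C^1$ with derivative $|x_i|^{b_i}$, and the matching $a_j=b_j$ for $j\neq i$ yields at once $\operatorname{div}V=x^B$ and $|V|=x^A/a_i$. The divergence theorem on any admissible $\Omega$ then gives
\[
\int_{\Omega}x^B\,dx=\int_{\partial\Omega}V\cdot\nu\,d\mathcal{H}^{N-1}\leq\int_{\partial\Omega}|V|\,d\mathcal{H}^{N-1}=\frac{1}{a_i}\int_{\partial\Omega}x^A\,d\mathcal{H}^{N-1},
\]
which is exactly $C_{A,B,N}\geq a_i$.

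For the matching upper bound I look for domains where $V\cdot\nu\approx |V|$ on most of $\partial\Omega$. Since $V$ is parallel to $\pm e_i$, this forces $\nu$ to be parallel to $e_i$, pointing toward thin slabs in the $e_i$ direction. Concretely, fix a smooth bounded open set $R\subset\{x'\in\mathbb{R}^{N-1}:x_j>0 \text{ when } a_j>0\}$ and consider $\Omega_h=R\times(0,h)$. The bottom face lies on $\{x_i=0\}$ where $x^A=0$ (using $a_i>0$); the top face contributes $h^{a_i}\int_{R}\prod_{j\neq i}|x_j|^{b_j}\,dx'$; and the lateral boundary contributes an $O(h^{a_i+1})$ term. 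The weighted volume equals $\frac{h^{a_i}}{a_i}\int_{R}\prod_{j\neq i}|x_j|^{b_j}\,dx'$ because $b_i+1=a_i$, so the ratio equals $a_i+O(h)\to a_i$ as $h\to 0^+$, matching the lower bound.

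The main obstacle I anticipate is purely technical: the slab $\Omega_h$ is only Lipschitz at the edges $\partial R\times\{0,h\}$, whereas the infimum is taken over smooth sets. A standard smoothing on a scale much smaller than $h$ changes the weighted perimeter by a lower-order quantity, and the argument goes through. The conceptual step---the choice of the calibration $V$---is essentially forced by the algebraic identity $\partial_i\bigl(\operatorname{sgn}(x_i)|x_i|^{a_i}/a_i\bigr)=|x_i|^{b_i}$, which is valid precisely because the hypothesis concentrates the whole excess $a-b=1$ in the single coordinate $i$.
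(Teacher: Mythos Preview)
Your proposal is correct. The lower bound is essentially the paper's argument in dual form: the paper passes through Lemma~\ref{3.0.4.0} to reduce to the functional inequality and then integrates by parts in the $i$-th variable, using exactly the identity $\partial_i\bigl(\tfrac{1}{b_i+1}\,\mathrm{sgn}(x_i)|x_i|^{b_i+1}\bigr)=|x_i|^{b_i}$ that you exploit; you skip the functional reduction and apply the divergence theorem directly to~$\Omega$, which is cleaner but the same computation. The upper bound, on the other hand, is genuinely different: the paper reuses the cone-like domains $\Omega_\varepsilon$ constructed in Lemma~\ref{3.0.2.0} and tracks their asymptotics, whereas your thin slabs $R\times(0,h)$ give a more elementary and transparent computation with fewer boundary pieces. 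Your construction also makes the saturation mechanism visible---the normal is $\pm e_i$ on the dominant faces, so $V\cdot\nu=|V|$ there---while the cones achieve this only asymptotically. One small point: for the divergence theorem you should restrict to $\overline{\Omega}\subset\mathbb{R}^N_A$ (legitimate here by Remark~\ref{3.0.3.1.0} since $a-b=1$), so that $x^A$ is smooth on the closure and no regularity issues for $V$ arise.
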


Our Theorem \ref{0.0.0.9} establishes all cases of existence and nonexistence of isoperimetric inequality for two nonnegative vectors satisfying $a-b\leq1$, which also implies the improvement and the necessity of $(ii)$ in the Theorem A. The condition $(\ref{0.0.0.9.1})$, equivalent to $(\ref{0.0.0.9.0})$, is even more general, because it shows us how to choose the entrie $i$ of the vectors $A$ and $B$, since we have already chosen the others $N-1$ entries. For instance, if we have $N-1$ entries iguals  in the vectors $A$ and $B$, $a_{j}=b_{j}$ for all $j\in\{1,\ldots, N\}\backslash\{i\}$, then the condition $(\ref{0.0.0.9.1})$ tells us that the isoperimetric inequality exists only if $a_{i}\leq b_{i}+1$.

The Theorem \ref{0.0.0.10} is surprising, since $C_{A,B,N}$ in this case does not depend on $N$. It is worth emphasizing that in the proof we get a decreasing sequence $\left(\Omega_{\varepsilon}\right)_{\varepsilon>0}\subset\mathbb{R}^{N}$, it means $\Omega_{\varepsilon}\subset\Omega_{\delta}$ whenever $\varepsilon<\delta$, such that 
\begin{align*}
	\dfrac{\int_{\partial\Omega_{\varepsilon}}x^{A}\mathcal{H}^{N-1}(x)}{\int_{\Omega_{\varepsilon}}x^{B}dx}\to a_{i} \, \ \text{as} \, \ \varepsilon\to 0,
\end{align*} 
however the $\int_{\Omega_{\varepsilon}}x^{A}dx\to 0$ as $\varepsilon\to 0$.	

The paper is organized as follows. In section 2, we define some basic elements that we will use throughout the paper. In section 3, we state some lemmata which will be used in the prove of Theorem \ref{0.0.0.9}. Finally, in section 4, we prove the Theorem \ref{0.0.0.10}.

{\section{Some definitions}}

Let us introduce some elements that we will use in this paper.

Given a nonnegative function $\omega:\mathbb{R}^{N}\to\mathbb{R}$, locally lipschitz on  $\mathbb{R}^{N}$,  we set the $P_{\omega}$-Perimeter of a measurable set $M$ by 

\begin{align*}
P_{\omega}(M):=\sup\left\{\int_{M}div(\omega(x)\nu(x))dx; \nu\in C_{0}^{1}(\mathbb{R}^{N},\mathbb{R}^{N}), \vert\nu\vert\leq 1\, \ \text{on}\, \ \mathbb{R}^{N}\right\}.
\end{align*}
When we consider the specific density $\omega(x)=x^{A}:=\vert x_{1}\vert^{a_{1}}\cdot\ldots\cdot\vert x_{N}\vert^{a_{N}}$, we denote $P_{A}$, instead of $P_{x^{A}}$.

If $\Omega$ is a smooth bounded open set, then the weighted perimeter is equivalent to the following 
\begin{align*}
	P_{\omega}(\Omega) = \int_{\partial\Omega}\omega(x) d\mathcal{H}^{N-1}(x),
\end{align*}
here $\mathcal{H}^{N-1}$ is the $(N-1)$-dimensional Hausdorff measure.
 
For a nonnegative measurable function $\gamma:\mathbb{R}^{N}\to\mathbb{R}$, we set by $m_{\gamma}$ the Lebesgue measure with weight $\gamma(x)dx$, namely,

\begin{align*}
	m_{\gamma}(M)=\int_{M}\gamma(x)dx,
\end{align*}
where $M$ is a $\mathcal{H}^{N}$-measurable set. Similarly, if $\gamma(x)=x^{B}:=\vert x\vert^{b_{1}}\cdot\ldots\cdot\vert x_{N}\vert^{b_{N}}$, we denote $m_{B}$, instead of $m_{x^{B}}$.

We now consider a measurable set $M$ with $0<m_{\gamma}(M)<\infty$, and we define
\begin{align*}
\mathcal{R}_{A,B, N}(M):=\dfrac{P_{A}(M)}{\left[m_{B}(M)\right]^{\frac{N+a-1}{N+b}}}.
\end{align*} For $\Omega\subset\mathbb{R}^{N}$ a smooth bounded open set, we then have

\begin{align*}
\mathcal{R}_{A, B, N}(\Omega):=\dfrac{\displaystyle\int_{\partial\Omega}x^{A}d\mathcal{H}^{N-1}(x)}{\left[\displaystyle\int_{\Omega}x^{B}dx\right]^{\frac{N+a-1}{N+b}}}.
\end{align*}

It is worth emphasizing that the constant $C_{A,B,N}$ (defined in (\ref{0.0.0.8})) satisfies
\begin{align*}
C_{A,B,N}=\inf\left\{\mathcal{R}_{A,B,N}(M);M \, \ \text{is measurable with} \, \ 0<m_{B}(M)<+\infty\right\}.
\end{align*}	 

 We also set \begin{align*}
	\mathcal{Q}_{A,B,N}(u):=\dfrac{\displaystyle\int_{\mathbb{R}^{N}}\vert\nabla u(x)\vert x^{A}dx}{\left[\displaystyle\int_{\mathbb{R}^{N}}\vert u\vert^{\frac{N+b}{N+a-1}}x^{B}dx\right]^{\frac{N+a-1}{N+b}}},
\end{align*}
for every  $u\in C_{c}^{1}(\mathbb{R}^{N})\backslash\{0\}$. Besides that, throughout this paper we will use the following notation: 

We say that a vector $A\in\mathbb{R}^{N}$ is nonnegative if all its entries are nonnegative. 

For $x=(x_{1},\ldots,x_{i-1}, x_{i}, x_{i+1},\ldots,x_{k-1}, x_{k},x_{k+1}, \ldots, x_{n})\in\mathbb{R}^{N}$ a vector, and

  $A=(a_{1},\ldots,a_{i-1},a_{i}, a_{i+1},\ldots,a_{k-1}, a_{k},a_{k+1}, \ldots, a_{n})\in\mathbb{R}^{N}$ 
   a nonnegative vector, we denote by

\begin{enumerate} 
\item[] $\overline{x}_{i}:=(x_{1},\ldots,x_{i-1},x_{i+1},\ldots, x_{n})$; 
\item[] $\overline{A}_{i}:=(a_{1},\ldots,a_{i-1},a_{i+1},\ldots, a_{n})$;
 \item[] $\overline{x}_{ik}:=(x_{1},\ldots,x_{i-1},x_{i+1},\ldots, \ldots,x_{k-1},x_{k+1}, \ldots x_{n})$; 
\item []
$\overline{A}_{ik}:=(a_{1},\ldots,a_{i-1},a_{i+1},\ldots, \ldots,a_{k-1},a_{k+1}, \ldots a_{n})$; 
\item [] $\overline{a}_{i}:=a-a_{i}=a_{1}+\cdots+a_{i-1}+a_{i+1}+\cdots+a_{n}$;
\item [] $\overline{a}_{ik}:=a-a_{i}-a_{k}=a_{1}+\cdots+a_{i-1}+a_{i+1}+\cdots a_{k-1}+a_{k+1}\cdots+a_{n}$.
\end{enumerate}
Finally, when $N\in\mathbb{N}$ and $r>0$, we denote by $B_{N}(r)$ the ball centered in $0$ and radius $r$ in $\mathbb{R}^{N}$, moreover $B_{N}^{+}(r)=B_{N}(r)\cap\mathbb{R}_{+}^{N}$, where $\mathbb{R}_{+}^{N}:=\{x=(x_{1},\ldots,x_{N})\in\mathbb{R}^{N}; x_{i}>0\, \, \text{for every} \, \ i\in \{1,\ldots, N\}\}$, and $\mathbb{R}^{N}_{A}=\{x=(x_{1},\ldots, x_{N})\in\mathbb{R}^{N};x_{i}>0\, \, \text{whenever}\, \ a_{i}>0\}$.
{\section{proof of the Theorem \ref{0.0.0.9}}}

This section contains relevant results for the two theorems presented in the introduction. Here, we prove the item $(i)$ of Theorem \ref{0.0.0.9} based on two important lemmata, moreover we estabilish the sufficient condition $(ii)$ using classical arguments such as coarea formula.

Borrowing ideas from \cite{Alvino}, we establish the following important result.
\begin{lemma}\label{3.0.1.0} Let $A=(a_{1},\ldots, a_{N})$ and $B=(b_{1},\ldots, a_{N})$ be two nonnegative vectors in $\mathbb{R}^{N}$.
 If \begin{align*}
 	C_{A,B,N}>0
 \end{align*}
then \begin{align*}a_{i}-\frac{N+a-1}{b+N}b_{i}\geq 0\end{align*}
or equivalently \begin{align*}
	a_{i}-\frac{N+\overline{a}_{i}-1}{N+\overline{b}_{i}}b_{i}\geq 0
\end{align*}
\end{lemma}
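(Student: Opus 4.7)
The plan is to prove the contrapositive: suppose that for some index $i$ we have $a_i<\frac{N+a-1}{N+b}b_i$. Since $a_i\geq 0$, this strict inequality forces $b_i>0$. By permutation symmetry of the weights $x^A$ and $x^B$ in the coordinates, we may assume $i=N$. I will construct a family $\{\Omega_t\}_{t>r_0}$ of smooth bounded competitors with $\mathcal{R}_{A,B,N}(\Omega_t)\to 0$, forcing $C_{A,B,N}=0$ and contradicting the hypothesis.

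The construction is simply to translate a fixed small ball far along the $x_N$-axis: fix $r_0\in(0,1)$ and set $\Omega_t:=B_N(r_0)+te_N$, where $e_N=(0,\ldots,0,1)$ and $t>r_0$. Then $\Omega_t$ is a smooth bounded open set lying in the half-space $\{x_N>0\}$, so that $0<m_B(\Omega_t)<\infty$ and $\Omega_t$ is admissible in $(\ref{0.0.0.8})$.

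The heart of the argument is an asymptotic calculation. After the change of variable $x=y+te_N$,
\begin{align*}
m_B(\Omega_t)=\int_{B_N(r_0)}|y_1|^{b_1}\cdots|y_{N-1}|^{b_{N-1}}(y_N+t)^{b_N}\,dy,
\end{align*}
and since $|y_N|\leq r_0$, the factor $(y_N+t)^{b_N}=t^{b_N}(1+O(1/t))$ uniformly in $y\in B_N(r_0)$, so $m_B(\Omega_t)=c_0\,t^{b_N}(1+o(1))$ as $t\to\infty$, with $c_0:=\int_{B_N(r_0)}|y_1|^{b_1}\cdots|y_{N-1}|^{b_{N-1}}\,dy>0$. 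The same argument on $\partial B_N(r_0)$ gives $P_A(\Omega_t)=c_1\,t^{a_N}(1+o(1))$ for some $c_1>0$. Setting $\alpha:=(N+a-1)/(N+b)$ we obtain
\begin{align*}
\mathcal{R}_{A,B,N}(\Omega_t)=\frac{c_1}{c_0^{\alpha}}\,t^{a_N-\alpha b_N}(1+o(1))\longrightarrow 0,
\end{align*}
because $a_N-\alpha b_N<0$ by hypothesis. Therefore $C_{A,B,N}=0$, yielding the contradiction.

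The equivalence between the two formulations is purely algebraic: after clearing the positive denominators $N+b$ and $N+\overline{b}_i$, the expressions $a_i-\frac{N+a-1}{N+b}b_i$ and $a_i-\frac{N+\overline{a}_i-1}{N+\overline{b}_i}b_i$ both have the same numerator $N(a_i-b_i)+a_ib-ab_i+b_i$ (using $\overline{a}_i=a-a_i$, $\overline{b}_i=b-b_i$), so their signs agree. The main point to check in the argument is minor bookkeeping: the asymptotics $(y_N+t)^{b_N}\sim t^{b_N}$ and $(y_N+t)^{a_N}\sim t^{a_N}$ are uniform because $r_0$ is fixed, and the constants $c_0,c_1$ are positive because the monomial integrands are strictly positive off sets of lower dimension in $B_N(r_0)$ and $\partial B_N(r_0)$ respectively.
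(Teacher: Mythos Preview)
Your proof is correct and follows essentially the same approach as the paper: both argue by contradiction using the family of balls $B(te_i,1)$ translated along the $i$-th coordinate axis, showing that the perimeter scales like $t^{a_i}$ while the weighted volume scales like $t^{b_i}$, so the isoperimetric quotient behaves like $t^{a_i-\alpha b_i}\to 0$. Your presentation is somewhat more streamlined (asymptotic notation in place of the paper's explicit upper and lower bounds), and you additionally verify the algebraic equivalence of the two displayed inequalities, which the paper states without proof.
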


\begin{proof}  Arguing by contradiction, we assume that \begin{align}\label{3.0.1.1}a_{i}-\frac{N+a-1}{b+N}b_{i}<0.\end{align} 
 Consider $t>2$ and $B(te_{i},1)$ the ball centered in $te_{i}$ and radius $1$. 

Using the area formula, we obtain
 \begin{align}\label{3.0.1.2}
	\int_{\partial B(te_{i},1)}x^{A}d\mathcal{H}^{N-1}(x)&=\int_{x_{1}^{2}+\cdots +x_{i-1}^{2}+(x_{i}-t)^{2}+x_{i+1}^{2}+\dots+x_{N}^{2}=1}\vert x_{1}\vert^{a_{1}}\cdot\ldots\cdot\vert x_{N}\vert^{a_{N}} d\mathcal{H}^{N-1}(x)\nonumber\\
	&=\int_{B_{N-1}(1)}\left\vert t+(1-\vert\overline{x}_{i}\vert^{2})^{\frac{1}{2}}\right\vert^{a_{i}}\frac{\overline{x}_{i}^{\overline{A}_{i}}}{(1-\vert \overline{x}_{i}\vert^{2})^{\frac{1}{2}}}d\overline{x}_{i}\nonumber\\
	&+\int_{B_{N-1}(1)}\left\vert t-(1-\vert\overline{x}_{i}\vert^{2})^{\frac{1}{2}}\right\vert^{a_{i}}\frac{\overline{x}_{i}^{\overline{A}_{i}}}{(1-\vert\overline{x}_{i}\vert^{2})^{\frac{1}{2}}}d\overline{x}_{i}\nonumber\\
	&\leq(1+2^{a_{i}})t^{a_{i}}\int_{B_{N-1}(1)}\frac{\overline{x}_{i}^{\overline{A}_{i}}}{(1-\vert\overline{x}_{i}\vert^{2})^{\frac{1}{2}}}d\overline{x}_{i}	
\end{align}

On the other hand, by change of variable and elementary inequalities, we get

\begin{align}\label{3.0.1.3}
\int_{ B(te_{i},1)}x^{B}dx&=\int_{x_{1}^{2}+\cdots +x_{i-1}^{2}+(x_{i}-t)^{2}+x_{i+1}^{2}+\cdots+x_{N}^{2}<1}\vert x_{1}\vert^{b_{1}}\cdot\cdots\cdot\vert x_{N}\vert^{b_{N}} dx\nonumber\\
&=\int_{t-1}^{t+1}\vert x_{i}\vert^{b_{i}}\left(\int_{B_{N-1}\left(\left[1-(x_{i}-t)^{2}\right]^{\frac{1}{2}}\right)}\overline{x}_{i}^{\overline{B}_{i}}d\overline{x}_{i}\right)dx_{i}\nonumber\\
&=\int_{t-1}^{t+1}\vert x_{i}\vert^{b_{i}}\left(1-(x_{i}-t)^{2}\right)^{\frac{\overline{b}_{i}+(N-1)}{2}}dx_{i}\int_{B_{N-1}(1)}\overline{x}_{i}^{\overline{B}_{i}}d\overline{x}_{i}\nonumber\\
&=\int_{B_{N-1}(1)}\overline{x}_{i}^{\overline{B}_{i}}d\overline{x}_{i}\int_{-1}^{1}\vert y+t\vert^{b_{i}}\left(1-y^{2}\right)^{\frac{\overline{b}_{i}+(N-1)}{2}}dy\nonumber\\
&\geq\int_{B_{N-1}(1)}\overline{x}_{i}^{\overline{B}_{i}}d\overline{x}_{i}\int_{0}^{1}\vert y+t\vert^{b_{i}}\left(1-y^{2}\right)^{\frac{\overline{b}_{i}+(N-1)}{2}}dy\nonumber\\
&\geq t^{b_{i}}\displaystyle\int_{B_{N-1}(1)}\overline{x}_{i}^{\overline{B}_{i}}d\overline{x}_{i}\int_{0}^{1}\left(1-y^{2}\right)^{\frac{\overline{b}_{i}+(N-1)}{2}}dy.
\end{align}
It follows from inequalities $(\ref{3.0.1.2})$ and $(\ref{3.0.1.3})$ that 
\begin{align}\label{3.0.1.4}
\dfrac{	\displaystyle\int_{\partial B(te_{i},1)}x^{A}d\mathcal{H}^{N-1}(x)}{\left[\displaystyle\int_{ B(te_{i},1)}x^{B}dx\right]^{\frac{N+a-1}{N+b}}}\leq\dfrac{(1+2^{a_{i}})t^{a_{i}}\displaystyle\int_{B_{N-1}(1)}\frac{\overline{x}_{i}^{\overline{A}_{i}}}{(1-\vert\overline{x}_{i}\vert^{2})^{\frac{1}{2}}}d\overline{x}_{i}}{\left[t^{b_{i}}\displaystyle\int_{B_{N-1}(1)}\overline{x}_{i}^{\overline{B}_{i}}d\overline{x}_{i}\int_{0}^{1}\left(1-y^{2}\right)^{\frac{\overline{b}_{i}+(N-1)}{2}}\right]^{\frac{N+a-1}{N+b}}}
\end{align}
Thus by $(\ref{3.0.1.1})$ and inequality $(\ref{3.0.1.4})$, we obtain \begin{align*}
	\displaystyle\lim_{t\to\infty}\dfrac{	\displaystyle\int_{\partial B(te_{i},1)}x^{A}d\mathcal{H}^{N-1}(x)}{\left[\displaystyle\int_{ B(te_{i},1)}x^{A}dx\right]^{\frac{N+a-1}{N+b}}}=0.
\end{align*}
Which is a contradiction with $	C_{A,B,N}>0$.
\end{proof}

The previous Lemma gives us the first behavior and huge dependence of the vector $B=(b_{1},\ldots,b_{N})$ with respect to the vector $A=(a_{1},\ldots, a_{N})$. For instance, if $a_{i}=0$, then the isoperimetric inequality exists only if $b_{i}=0$. 

\begin{lemma}\label{3.0.2.0}
	Let $A=(a_{1},\ldots, a_{N})$ and $B=(b_{1},\ldots, a_{N})$ be two nonnegative vectors in $\mathbb{R}^{N}$. If
	\begin{align*}
	C_{A,B,N}>0
	\end{align*}
	then \begin{align*}
	a_{i}-\dfrac{N+a-1}{N+b}b_{i}\leq\dfrac{N+a-1}{N+b}
	\end{align*}
or equivalently \begin{align*}  \frac{a_{i}}{b_{i}+1}\leq\frac{N+\overline{a}_{i}-1}{N+\overline{b}_{i}-1}.
\end{align*}
\end{lemma}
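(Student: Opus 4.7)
The plan is to argue by contradiction, mirroring the structure of Lemma~\ref{3.0.1.0} but with a different family of competitors. Suppose $C_{A,B,N}>0$ and that the conclusion fails for some index $i$, i.e.\ $a_i>\frac{N+a-1}{N+b}(b_i+1)$. Whereas Lemma~\ref{3.0.1.0} used balls translated to infinity along $e_i$ to exploit the blow-up of $|x_i|^{b_i}$, here I want to do the opposite and concentrate the competitor near the hyperplane $\{x_i=0\}$: on a thin slab the perimeter picks up an $|x_i|^{a_i}\sim\varepsilon^{a_i}$ factor, while the volume, which comes with an extra $x_i$-integration, picks up only $\varepsilon^{b_i+1}$. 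Since the critical scaling exponent of $\mathcal{R}_{A,B,N}$ in the $x_i$-direction is precisely $a_i-(b_i+1)\frac{N+a-1}{N+b}$, the failure of the claimed inequality should force $\mathcal{R}_{A,B,N}$ to collapse.

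Concretely, I will fix a smooth bounded open set $V\subset\mathbb{R}^{N-1}$ with $\int_{V}\overline{x}_i^{\,\overline{B}_i}\, d\overline{x}_i>0$ (for instance $V=B_{N-1}(1)$) and, for $\varepsilon\in(0,1)$, take the competitor
\[
\Omega_\varepsilon := (-\varepsilon,\varepsilon)\times V.
\]
The product structure yields at once
\[
m_B(\Omega_\varepsilon)=\frac{2\varepsilon^{b_i+1}}{b_i+1}\int_{V}\overline{x}_i^{\,\overline{B}_i}\, d\overline{x}_i,
\]
\[
P_A(\Omega_\varepsilon)=2\varepsilon^{a_i}\int_{V}\overline{x}_i^{\,\overline{A}_i}\, d\overline{x}_i \;+\; \frac{2\varepsilon^{a_i+1}}{a_i+1}\int_{\partial V}\overline{x}_i^{\,\overline{A}_i}\, d\mathcal{H}^{N-2}(\overline{x}_i),
\]
and all four integrals are finite and strictly positive since $\overline{A}_i,\overline{B}_i$ are nonnegative and $V$ is smooth and bounded. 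For $\varepsilon<1$ the lateral term is subdominant, so $P_A(\Omega_\varepsilon)\leq C_{V,A}\,\varepsilon^{a_i}$ for some constant depending only on $V$ and $A$.

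Plugging these estimates into the Rayleigh-type quotient,
\[
\mathcal{R}_{A,B,N}(\Omega_\varepsilon)\;\leq\; C\,\varepsilon^{\,a_i-(b_i+1)\frac{N+a-1}{N+b}},
\]
and under the negation of the conclusion the exponent is strictly positive, so $\mathcal{R}_{A,B,N}(\Omega_\varepsilon)\to 0$ as $\varepsilon\to 0^+$, contradicting $C_{A,B,N}>0$. The equivalent inequality $\frac{a_i}{b_i+1}\leq \frac{N+\overline{a}_i-1}{N+\overline{b}_i-1}$ is then just the algebraic rewriting already recorded in~(\ref{0.0.0.9.1}). I expect the only mildly delicate point to be the handling of the edges of the slab in the definition of the weighted perimeter; since the edge set has zero $(N-1)$-dimensional Hausdorff measure, $P_A(\Omega_\varepsilon)$ is exactly the integral of $x^A$ over the regular part of $\partial\Omega_\varepsilon$, which is what the product computation produces.
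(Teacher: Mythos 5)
Your argument is correct and rests on the same core idea as the paper's proof of Lemma \ref{3.0.2.0}: argue by contradiction and exhibit a family of thin sets hugging the hyperplane $\{x_i=0\}$, for which the perimeter scales like $\varepsilon^{a_i}$ while the weighted volume scales like $\varepsilon^{b_i+1}$, so that the quotient behaves like $\varepsilon^{a_i-(b_i+1)\frac{N+a-1}{N+b}}\to 0$ under the negated inequality. The difference is the competitor: the paper uses the wedge $\Omega_\varepsilon=\{|x|<R,\ x_j>0,\ x_i<\varepsilon|\overline{x}_i|\}$ inside the positive orthant, which forces a decomposition of $\partial\Omega_\varepsilon$ into several pieces ($A^1_\varepsilon, A^2_\varepsilon, A^3_\varepsilon$ and the faces $C^k_\varepsilon$) and a sequence of estimates, whereas your slab $(-\varepsilon,\varepsilon)\times V$ exploits the product structure to make the perimeter and volume computations exact in one line, with the lateral term visibly of higher order. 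What you lose relative to the paper is only incidental: the same wedge is recycled verbatim in the proof of Theorem \ref{0.0.0.10} to extract the limiting value $a_i$ of the quotient when $a_i=b_i+1$ (your slab would in fact also yield this limit, since then $\overline{A}_i=\overline{B}_i$ and the cross-sectional integrals cancel). The corner issue you flag is handled exactly as in the paper: the competitor there is likewise only Lipschitz, and the paper's identification of $C_{A,B,N}$ with the infimum over measurable sets of finite positive $m_B$-measure (Section 2), or a routine rounding of edges, makes the use of such sets legitimate; note that merely saying the edge set has zero $\mathcal{H}^{N-1}$-measure is shorthand for the standard fact that for Lipschitz sets the distributional weighted perimeter coincides with the boundary integral.
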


\begin{proof} Again, by an argument of contradiction, we assume that \begin{align}\label{3.0.2.0.1}
	a_{i}-\dfrac{N+a-1}{N+b}b_{i}>\dfrac{N+a-1}{N+b}.
	\end{align} 
We define for a positive $\varepsilon$ the set

\begin{align*}
	\Omega_{\varepsilon}=\left\{x\in\mathbb{R}^{N};\vert x\vert< R^{2}, x_{j}>0\, \ \text{for all}\, \ j\in\{1,\ldots, N\}\, \ \text{and}\, \ x_{i}<\varepsilon\vert \overline{x}_{i}\vert \right\}.
  \end{align*}
 We may see that
  \begin{align}\label{3.0.2.1}
 	&\partial\Omega_{\varepsilon}=\left\{x\in\mathbb{R}^{N};x_{j}>0\, \ \text{for all}\, \ j\in\{1,\ldots, N\}, x_{i}=\varepsilon\vert\overline{x}_{i}\vert, \, \ \text{and}\, \ \vert\overline{x}_{i}\vert\leq \dfrac{R}{(1+\varepsilon^{2})^{\frac{1}{2}}} \right\}\nonumber\\
 	&\bigcup\left\{x\in\mathbb{R}^{N};x_{j}>0\, \ \text{for all}\, \ j\in\{1,\ldots, N\},\dfrac{R}{(1+\varepsilon^{2})^{\frac{1}{2}}}\leq\vert\overline{x}_{i}\vert\leq R, \, \ \text{and} \, \ x_{i}=\left(R^{2}-\vert\overline{x}_{i}\vert^{2}\right)^{\frac{1}{2}}\right\}\nonumber\\
 	&\bigcup\left\{x\in\mathbb{R}^{N};x_{j}>0\, \ \text{for all}\, \ j\in\{1,\ldots, N\}\backslash\{i\}, x_{i}=0, \vert x\vert\leq R\right\}\nonumber\\
 	&\bigcup_{k=1, k\neq i}^{N}\left\{x\in\mathbb{R}^{N};x_{j}>0\, \ \text{for all}\, \ j\in\{1,\ldots, N\}\backslash\{k\}, x_{k}=0, \vert x\vert\leq R, \, \ \text{and}\, \ x_{i}\leq\varepsilon\vert\overline{x}_{ik}\vert\right\}\nonumber\\
 	&=:A_{\varepsilon}^{1}\cup A_{\varepsilon}^{2}\cup A_{\varepsilon}^{3}\bigcup_{k=1, k\neq i}^{N} C_{\varepsilon}^{k}.
 \end{align}
 By definition of $\Omega_{\varepsilon}$ and change of variable, we get
 \begin{align}\label{3.0.2.2}
 \int_{\Omega_{\varepsilon}}x^{B}dx&=\int_{B_{N-1}^{+}\left(\frac{R}{(1+\varepsilon^{2})^{\frac{1}{2}}}\right)}\int_{0}^{\varepsilon\vert\overline{x}_{i}\vert }\overline{x}_{i}^{\overline{B}_{i}}x_{i}^{b_{i}}dx_{i}d\overline{x}_{i}+\int_{B_{N-1}^{+}(R)\backslash B_{N-1}^{+}\left(\frac{R}{(1+\varepsilon^{2})^{1/2}}\right)}\int_{0}^{\left(R^{2}-\vert \overline{x}_{i}\vert^{2}\right)^{1/2}}\overline{x}_{i}^{\overline{B}_{i}}x_{i}^{b_{i}}dx_{i}d\overline{x}_{i}\nonumber\\
 &\geq\dfrac{\varepsilon^{b_{i}+1}}{b_{i}+1}\int_{B_{N-1}^{+}\left(\frac{R}{(1+\varepsilon^{2})^{\frac{1}{2}}}\right)}\overline{x}_{i}^{\overline{B}_{i}}\vert\overline{x}_{i}\vert^{b_{i}+1}d\overline{x}_{i}\nonumber\\
 &=\dfrac{\varepsilon^{b_{i}+1}R^{N+b}}{(b_{i}+1)(1+\varepsilon^{2})^{\frac{N+b}{2}}}\int_{B_{N-1}^{+}(1)}\overline{x}_{i}^{\overline{B}_{i}}\vert\overline{x}_{i}\vert^{b_{i}+1}d\overline{x}_{i}.
 \end{align}

By (\ref{3.0.2.1}), we obtain

\begin{align}\label{3.0.2.3}
\int_{\partial\Omega_{\varepsilon}} x^{A}d\mathcal{H}^{N-1}(x)&=\int_{A_{\varepsilon}^{1}}x^{A}d\mathcal{H}^{N-1}(x)+\int_{A_{\varepsilon}^{2}}x^{A}d\mathcal{H}^{N-1}(x)+\int_{A_{\varepsilon}^{3}}x^{A}d\mathcal{H}^{N-1}(x)\nonumber\\
&+\sum_{k=1, k\neq i}^{N}\int_{C_{\varepsilon}^{k}}	x^{A}d\mathcal{H}^{N-1}(x).
\end{align}
We now estimate the boundary area with density $x^{A}d\mathcal{H}^{N-1}(x)$. First, we calculate on $C_{\varepsilon}^{k}$'s.

Let $k\neq i$. If $a_{k}>0$, then
\begin{align}\label{3.0.2.4}
	\int_{C_{\varepsilon}^{k}}x^{A}d\mathcal{H}^{N-1}(x)=0.
\end{align} 
Otherwise, if $a_{k}=0$, then
\begin{align}\label{3.0.2.5}
\int_{C_{\varepsilon}^{k}}x^{A}d\mathcal{H}^{N-1}(x)&=\int_{B_{N-2}^{+}\left(\frac{R}{(1+\varepsilon^{2})^{1/2}}\right)}\int_{0}^{\varepsilon\vert \overline{x}_{ik}\vert}\overline{x}_{ik}^{\overline{A}_{ik}}x_{i}^{a_{i}}dx_{i}d\overline{x}_{ik}\nonumber\\
&+\int_{B_{N-2}^{+}(R)\backslash B_{N-2}^{+}\left(\frac{R}{(1+\varepsilon^{2})^{1/2}}\right)}\int_{0}^{\left(R^{2}-\vert \overline{x}_{ik}\vert^{2}\right)^{1/2}}\overline{x}_{ik}^{\overline{A}_{ik}}x_{i}^{a_{i}}dx_{i}d\overline{x}_{ik}\nonumber\\
&=\dfrac{\varepsilon^{a_{i}+1}}{a_{i}+1}\int_{B_{N-2}^{+}\left(\frac{R}{(1+\varepsilon^{2})^{1/2}}\right)}\overline{x}_{ik}^{\overline{A}_{ik}}\vert \overline{x}_{ik}\vert^{a_{i}+1}d\overline{x}_{ik}\nonumber\\
&+\frac{1}{a_{i}+1}\int_{B_{N-2}^{+}(R)\backslash B_{N-2}^{+}\left(\frac{R}{(1+\varepsilon^{2})^{1/2}}\right)}\overline{x}_{ik}^{\overline{A}_{ik}}\left(R^{2}-\vert\overline{x}_{ik}\vert^{2}\right)^{\frac{a_{i}+1}{2}}d\overline{x}_{ik} \nonumber\\
&=\dfrac{\varepsilon^{a_{i}+1}R^{N+a-1}}{(a_{i}+1)(1+\varepsilon^{2})^{\frac{N+a-1}{2}}}\int_{B_{N-2}^{+}(1)}\overline{x}_{ik}^{\overline{A}_{ik}}\vert \overline{x}_{ik}\vert^{a_{i}+1}d\overline{x}_{ik}\nonumber\\
&+\dfrac{R^{N+a-1}}{(a_{i}+1)\left(1+\varepsilon^{2}\right)^{\frac{N+\overline{a}_{i}-2}{2}}}\int_{B_{N-2}^{+}\left((1+\varepsilon^{2})^{\frac{1}{2}}\right)\backslash B_{N-2}^{+}\left(1\right)}\left(1-\dfrac{\vert \overline{x}_{ik}\vert^{2}}{1+\varepsilon^{2}}\right)^{\frac{a_{i}+1}{2}}\overline{x}_{ik}^{\overline{A}_{ik}}d\overline{x}_{ik}\nonumber\\
&\leq \dfrac{R^{N+a-1}O(\varepsilon^{a_{i}+1})}{\left(1+\varepsilon^{2}\right)^{\frac{N+a-1}{2}}}+\dfrac{R^{N+a-1}\varepsilon^{a_{i}+1}}{\left(1+\varepsilon^{2}\right)^\frac{N+a-1}{2}}\int_{B_{N-2}^{+}\left((1+\varepsilon^{2})^{\frac{1}{2}}\right)\backslash B_{N-2}^{+}\left(1\right)}\overline{x}_{ik}^{\overline{A}_{ik}}d\overline{x}_{ik}\nonumber\\
&\leq \dfrac{R^{N+a-1}O(\varepsilon^{a_{i}+1})}{\left(1+\varepsilon^{2}\right)^{\frac{N+a-1}{2}}}+\dfrac{R^{N+a-1}\varepsilon^{a_{i}+1}}{\left(1+\varepsilon^{2}\right)^\frac{N+a-1}{2}}\left(\left(1+\varepsilon^{2}\right)^{\frac{N+\overline{a}_{i}-2}{2}}-1\right)\int_{ B_{N-2}^{+}\left(1\right)}\overline{x}_{ik}^{\overline{A}_{ik}}d\overline{x}_{ik}\nonumber\\
&\leq \dfrac{R^{N+a-1}O(\varepsilon^{a_{i}+1})}{\left(1+\varepsilon^{2}\right)^{\frac{N+a-1}{2}}}+\dfrac{R^{N+a-1}O(\varepsilon^{a_{i}+3})}{\left(1+\varepsilon^{2}\right)^\frac{N+a-1}{2}}.
\end{align}

We now compute the boundary area on $A_{\varepsilon}^{1}$. It follows from Area Formula and change of variable that

\begin{align}\label{3.0.2.6}
\int_{A_{\varepsilon}^{1}}x^{A}d\mathcal{H}^{N-1}(x)&=\int_{B_{N-1}^{+}\left(\frac{R}{(1+\varepsilon^{2})^{\frac{1}{2}}}\right)}\overline{x}_{i}^{\overline{A}_{i}}\varepsilon^{a_{i}}\vert \overline{x}_{i}\vert^{a_{i}}(1+\varepsilon^{2})^{\frac{1}{2}}d\overline{x}_{i}\nonumber\\
&=\dfrac{\varepsilon^{a_{i}}R^{N+a-1}}{(1+\varepsilon^{2})^{\frac{N+a-2}{2}}}\int_{B_{N-1}^{+}(1)}\overline{x}_{i}^{\overline{A}_{i}}\vert\overline{x}_{i}\vert^{a_{i}}d\overline{x}_{i}.
\end{align} 

Finally, we estimate the last integral. By change of variable and elementary inequalities, we obtain 
\begin{align}\label{3.0.2.7}
\int_{A_{\varepsilon}^{2}}x^{A}d\mathcal{H}^{N-1}(x)&=\int_{B_{N-1}^{+}(R)\backslash B_{N-1}^{+}\left(\frac{R}{(1+\varepsilon^{2})^{1/2}}\right)}\overline{x}_{i}^{\overline{A}_{i}}\left(R^{2}-\vert\overline{x}_{i}
\vert^{2}\right)^{\frac{a_{i}}{2}}d\overline{x}_{i}\nonumber\\
&=R^{N+a-1}\int_{B_{N-1}^{+}(1)\backslash B_{N-1}^{+}\left(\frac{1}{(1+\varepsilon^{2})^{1/2}}\right)}\overline{x}_{i}^{\overline{A}_{i}}\left(1-\vert\overline{x}_{i}\vert^{2}\right)^{\frac{a_{i}}{2}}d\overline{x}_{i}\nonumber\\
&=\dfrac{R^{N+a-1}}{(1+\varepsilon^{2})^{\frac{N+\overline{a}_{i}-1}{2}}}\int_{B_{N-1}^{+}((1+\varepsilon^{2})^{\frac{1}{2}})\backslash B_{N-1}^{+}\left(1\right)}\overline{x}_{i}^{\overline{A}_{i}}\left(1-\dfrac{\vert \overline{x}_{i}\vert^{2}}{1+\varepsilon^{2}}\right)^{\frac{a_{i}}{2}}d\overline{x}_{i}\nonumber\\
&\leq\dfrac{R^{N+a-1}}{(1+\varepsilon^{2})^{\frac{N+\overline{a}_{i}-1}{2}}}\int_{B_{N-1}^{+}((1+\varepsilon^{2})^{\frac{1}{2}})\backslash B_{N-1}^{+}\left(1\right)}\overline{x}_{i}^{\overline{A}_{i}}\left(1-\dfrac{1}{1+\varepsilon^{2}}\right)^{\frac{a_{i}}{2}}d\overline{x}_{i}\nonumber\\
&=\dfrac{R^{N+a-1}\varepsilon^{a_{i}}}{(1+\varepsilon^{2})^{\frac{N+a-1}{2}}}\int_{B_{N-1}^{+}((1+\varepsilon^{2})^{\frac{1}{2}})\backslash B_{N-1}^{+}\left(1\right)}\overline{x}_{i}^{\overline{A}_{i}}d\overline{x}_{i}\nonumber\\
&=\dfrac{R^{N+a-1}\varepsilon^{a_{i}}}{(1+\varepsilon^{2})^{\frac{N+a-1}{2}}}\left((1+\varepsilon^{2})^{\frac{N+\overline{a}_{i}-1}{2}}-1\right)\int_{ B_{N-1}^{+}\left(1\right)}\overline{x}_{i}^{\overline{A}_{i}}d\overline{x}_{i}\nonumber\\
&=R^{N+a-1}O(\varepsilon^{a_{i}+2}).
\end{align}

Thus, it follows from $(\ref{3.0.2.2})$, $(\ref{3.0.2.3})$, $(\ref{3.0.2.4})$ or $(\ref{3.0.2.5})$, $(\ref{3.0.2.6})$, and $(\ref{3.0.2.7})$ that
\begin{align}\label{3.0.2.8}
\dfrac{P_{A}(\Omega_{\varepsilon})}{\left[m_{B}(\Omega_{\varepsilon})\right]^{\frac{N+a-1}{N+b}}}& \leq\dfrac{\dfrac{\varepsilon^{a_{i}}R^{N+a-1}}{(1+\varepsilon^{2})^{\frac{N+a-2}{2}}}\displaystyle\int_{B_{N-1}^{+}(1)}\overline{x}_{i}^{\overline{A}_{i}}\vert \overline{x}_{i}\vert^{a_{i}}d\overline{x}_{i}+R^{N+a-1}\left(O(\varepsilon^{a_{i}+1})+O(\varepsilon^{a_{i}+2})+O(\varepsilon^{a_{i}+3})\right)}{\left[\dfrac{\varepsilon^{b_{i}+1}R^{N+b}}{(b_{i}+1)(1+\varepsilon^{2})^{\frac{N+b}{2}}}\displaystyle\int_{B_{N-1}^{+}(1)}\overline{x}_{i}^{\overline{B}_{i}}\vert\overline{x}_{i}\vert^{b_{i}+1}d\overline{x}_{i}\right]^{\frac{N+a-1}{N+b}}}\nonumber\\
&=\varepsilon^{a_{i}-\frac{N+a-1}{N+b}(b_{i}+1)}\dfrac{\frac{1}{(1+\varepsilon^{2})^{\frac{N+a-2}{2}}}\displaystyle\int_{B_{N-1}^{+}(1)}\overline{x}_{i}^{\overline{A}_{i}}\vert \overline{x}_{i}\vert^{a_{i}}d\overline{x}_{i}}{\left[\dfrac{1}{(b_{i}+1)(1+\varepsilon^{2})^{\frac{N+b}{2}}}\displaystyle\int_{B_{N-1}^{+}(1)}\overline{x}_{i}^{\overline{B}_{i}}\vert \overline{x}_{i}\vert^{b_{i}+1}d\overline{x}_{i}\right]^{\frac{N+a-1}{N+b}}}\nonumber\\
&+O\left(\varepsilon^{a_{i}+1-\frac{N+a-1}{N+b}(b_{i}+1)}\right)+O\left(\varepsilon^{a_{i}+2-\frac{N+a-1}{N+b}(b_{i}+1)}\right)+O\left(\varepsilon^{a_{i}+3-\frac{N+a-1}{N+b}(b_{i}+1)}\right)
\end{align}
Therefore, the inequality $(\ref{3.0.2.8})$, and $(\ref{3.0.2.0.1})$ imply that \begin{align*}
	\lim_{\varepsilon\to 0}\dfrac{P_{A}(\Omega_{\varepsilon})}{\left[m_{B}(\Omega_{\varepsilon})\right]^{\frac{N+a-1}{N+b}}}=0.
\end{align*}
Which is a contradiction with our assumption.
\end{proof}


The next result is expected and the proof relies on classical arguments, see for example \cite{Talenti1}. For convenience of the reader, we sketch the proof. 

\begin{lemma}\label{3.0.3.0}
	Let $\Omega$ be a Lipschitz bounded open set. Consider $\omega$ a nonnegative locally lipschitz function and $\gamma$ a nonnegative continuous function on $\mathbb{R}^{N}$.  Then there exists a smooth and compactly supported sequence $(u_{\varepsilon})_{\varepsilon>0}$ on $\mathbb{R}^{N}$ such that
	\begin{align}\label{3.0.3.1}
	\lim_{\varepsilon\to 0}\int_{\mathbb{R}^{N}}\vert u_{\varepsilon}\vert^{p} \gamma(x)dx=\int_{\Omega}\gamma(x)dx,\, \ \text{for each}\, \ p\geq 1,
	\end{align}
and mainly \begin{align}\label{3.0.3.2}
\lim_{\varepsilon\to 0}\int_{\mathbb{R}^{N}}\vert \nabla u_{\varepsilon}(x)\vert\omega(x)dx=\int_{\partial\Omega}\omega(x)dx.
\end{align}
\end{lemma}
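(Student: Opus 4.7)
The plan is to smooth out the characteristic function $\chi_{\Omega}$ using the signed distance function to $\partial\Omega$. Let $d_{\Omega}(x)$ denote the signed distance, taken positive inside $\Omega$ and negative outside, and fix once and for all a smooth nondecreasing function $f\colon\mathbb{R}\to[0,1]$ with $f\equiv 0$ on $(-\infty,0]$ and $f\equiv 1$ on $[1,+\infty)$. For $\varepsilon>0$ I set
\[
v_{\varepsilon}(x):=f\!\left(\frac{d_{\Omega}(x)}{\varepsilon}\right),
\]
which is Lipschitz with $\operatorname{supp} v_{\varepsilon}\subset\overline{\Omega}$, identically $1$ on the inner set $\{d_{\Omega}\geq\varepsilon\}$, and satisfying $0\leq v_{\varepsilon}\leq\chi_{\overline{\Omega}}$.

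For the first convergence (\ref{3.0.3.1}), I would observe that $v_{\varepsilon}^{p}\to\chi_{\Omega}$ pointwise almost everywhere in $\mathbb{R}^{N}$ (using that $\partial\Omega$ has Lebesgue measure zero since $\Omega$ is Lipschitz), with the domination $v_{\varepsilon}^{p}\gamma\leq\chi_{\overline{\Omega}}\gamma\in L^{1}(\mathbb{R}^{N})$ because $\overline{\Omega}$ is compact and $\gamma$ is continuous. The dominated convergence theorem then yields the claim for every $p\geq 1$.

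For the main identity (\ref{3.0.3.2}), I would use $|\nabla d_{\Omega}|=1$ a.e.\ in a neighborhood of $\partial\Omega$, the chain rule $\nabla v_{\varepsilon}=\varepsilon^{-1}f'(d_{\Omega}/\varepsilon)\nabla d_{\Omega}$, and the coarea formula to rewrite
\[
\int_{\mathbb{R}^{N}}|\nabla v_{\varepsilon}(x)|\,\omega(x)\,dx=\int_{0}^{\varepsilon}\frac{f'(t/\varepsilon)}{\varepsilon}\,\phi(t)\,dt,\qquad\phi(t):=\int_{\{d_{\Omega}=t\}}\omega\,d\mathcal{H}^{N-1}.
\]
Since $\Omega$ is Lipschitz, the nearest-point projection provides a bi-Lipschitz parametrization of a tubular neighborhood of $\partial\Omega$ by $\partial\Omega\times(-\eta,\eta)$, so $\phi$ is continuous at $t=0^{+}$ with $\phi(0^{+})=\int_{\partial\Omega}\omega\,d\mathcal{H}^{N-1}$. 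Combined with the normalization $\int_{0}^{\varepsilon}\varepsilon^{-1}f'(t/\varepsilon)\,dt=1$, the right-hand side is a weighted average of $\phi$ near $0^{+}$ and therefore converges to $\int_{\partial\Omega}\omega\,d\mathcal{H}^{N-1}$ as $\varepsilon\to 0$.

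Finally, to obtain genuine smoothness I would regularize: set $u_{\varepsilon}:=v_{\varepsilon}\ast\rho_{\delta(\varepsilon)}$ for a standard mollifier $\rho_{\delta}$ and $\delta(\varepsilon)$ chosen small enough (for instance $\delta(\varepsilon)\leq\varepsilon^{2}$) so that routine mollification estimates transfer both limits (\ref{3.0.3.1})--(\ref{3.0.3.2}) from $v_{\varepsilon}$ to $u_{\varepsilon}\in C_{c}^{\infty}(\mathbb{R}^{N})$. The principal technical obstacle is the continuity of $\phi(t)$ as $t\to 0^{+}$ for a merely Lipschitz boundary; this is precisely where the regularity hypothesis on $\Omega$ together with the continuity of $\omega$ enters, via bi-Lipschitz tubular parametrizations and dominated convergence on the parameter space $\partial\Omega$.
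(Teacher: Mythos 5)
Your construction (smoothing $\chi_{\Omega}$ by composing a cutoff with the signed distance and invoking the coarea formula) is a genuinely different route from the paper, which mollifies $\chi_{\Omega}$ directly, gets the lower bound $\int_{\partial\Omega}\omega\,d\mathcal{H}^{N-1}\leq\liminf\int|\nabla u_{\varepsilon}|\omega$ by testing against vector fields, and gets the upper bound from the translation estimate $\int|\chi_{\Omega}(x+h)-\chi_{\Omega}(x)|\,dx\leq|h|\,\mathcal{H}^{N-1}(\partial\Omega)$ together with uniform continuity of $\omega$ near $\partial\Omega$. Your first limit (\ref{3.0.3.1}) is fine, and the final mollification step is routine. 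The problem is the step you yourself flag as the technical heart: you justify $\phi(t)\to\int_{\partial\Omega}\omega\,d\mathcal{H}^{N-1}$ as $t\to0^{+}$ by asserting that, because $\Omega$ is Lipschitz, the nearest-point projection gives a bi-Lipschitz parametrization of a tubular neighborhood by $\partial\Omega\times(-\eta,\eta)$. That is false at this level of regularity: already for a square, or for a domain with a re-entrant corner, the nearest-point projection onto $\partial\Omega$ is neither single-valued nor injective on any neighborhood (all points near a re-entrant corner on the inside project to the vertex), and bi-Lipschitz tubular neighborhoods via the normal exponential map require sets of positive reach, i.e.\ essentially $C^{1,1}$ boundaries. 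So the claimed continuity of $t\mapsto\phi(t)=\int_{\{d_{\Omega}=t\}}\omega\,d\mathcal{H}^{N-1}$ at $t=0^{+}$ is unproved, and it is exactly the statement carrying the whole of (\ref{3.0.3.2}).

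The gap is repairable, but not by a tubular-neighborhood argument. What you actually need is weaker: since your expression is the average $\int_{0}^{\varepsilon}\varepsilon^{-1}f'(t/\varepsilon)\phi(t)\,dt$, it suffices (taking $f$ piecewise linear, or $\Vert f'\Vert_{\infty}$ close to $1$) to know that $\varepsilon^{-1}\int_{\{0<d_{\Omega}<\varepsilon\}\cap\Omega}\omega\,dx\to\int_{\partial\Omega}\omega\,d\mathcal{H}^{N-1}$, i.e.\ a weighted, one-sided (inner) Minkowski-content theorem for Lipschitz boundaries, together with lower semicontinuity of the weighted perimeter under $L^{1}$ convergence of $\{d_{\Omega}>t\}$ to $\Omega$ for the matching lower bound. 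The Minkowski-content statement for Lipschitz domains is true but is itself a nontrivial theorem (local graph representation plus Federer-type arguments), so it must be quoted or proved, not replaced by the nearest-point parametrization. Note that the paper's proof sidesteps this entirely: its upper bound comes from the divergence theorem applied to $\rho_{\varepsilon}\ast\chi_{\Omega}$ and the uniform continuity of $\omega$ on a neighborhood of $\partial\Omega$, which only uses $\mathcal{H}^{N-1}(\partial\Omega)<\infty$, and its lower bound is the same duality argument you would in any case need to transfer the limit through your final mollification.
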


\begin{proof}
We begin with the following assertion.

\textbf{Claim 1}. \begin{align*}
\int_{\mathbb{R}^{N}}\left\vert\chi_{\Omega}(x+h)-\chi_{\Omega}(x)\right\vert dx\leq\vert h\vert\mathcal{H}^{N-1}(\partial\Omega),
\end{align*}
where $\chi_{\Omega}$ is the characteristic function on the set $\Omega$, and $h$ is any vector in $\mathbb{R}^{N}$. 

\textit{proof of the claim 1.} Let $\varphi$ be a smooth and compactly supported function on $\mathbb{R}^{N}$. We then have
\begin{align*}
	\int_{\mathbb{R}^{N}}\left[\chi_{\Omega}(x+h)-\chi_{\Omega}(x)\right]\varphi(x)dx=\int_{\mathbb{R}^{N}}\chi_{\Omega}(x)\left[\varphi(x-h)-\varphi(x)\right]dx=\int_{\Omega}\left[\varphi(x-h)-\varphi(x)\right]dx.	
\end{align*} 
By fundamental theorem of calculus and divergent theorem, we get
\begin{align*}
	\int_{\Omega}\varphi(x-h)-\varphi(x)dx&=-\int_{\Omega}\int_{0}^{1}\nabla\varphi(x-th)h dtdx\\
	&=-\int_{\Omega}\left(h\int_{0}^{1}\nabla\varphi(x-th)dt\right)dx\\
	&=-\int_{\partial\Omega}\left(\int_{0}^{1}\varphi(x-th)dt\right)\langle h, \eta(x)\rangle\mathcal{H}^{N-1}(x),
\end{align*}
where $\eta$ denotes the outward unit normal vector with respect to $\Omega$.

This gives the estimate,
\begin{align*}
	\left\vert \int_{\mathbb{R}^{N}}\left[\chi_{\Omega}(x+h)-\chi_{\Omega}(x)\right]\varphi(x)dx\right\vert\leq\sup_{y\in\mathbb{R}^{N}}\vert\varphi(y)\vert\vert h\vert\mathcal{H}^{N-1}(\partial\Omega).
	\end{align*}
Thus, the proof of claim 1 follows. \qedc

\textbf{Claim 2.} Let a mollifier $\rho\in C_{c}^{\infty}(\mathbb{R}^{N})$ supported in the unit ball $B_{N}(0,1)$. We define 
\begin{align*}
u_{\varepsilon}(x):=\rho_{\varepsilon}\ast\chi_{\Omega}(x)=\int_{\mathbb{R}^{N}}\rho_{\varepsilon}(x-y)\chi_{\Omega}(y)dy,
\end{align*}
where $\rho_{\varepsilon}(x)=\varepsilon^{-N}\rho\left(\frac{x}{\varepsilon}\right)$. Then \begin{align*}
	u_{\varepsilon}\to\chi_{\Omega} \, \ \text{in} \, \ L^{1}(\Omega, dx), \,\text{and}\, \  L^{1}(\Omega, \gamma(x) dx).
\end{align*}
\textit{proof of the claim 2.} 
By properties of the function $\rho$, we obtain
\begin{align*}
u_{\varepsilon}(x)-\chi_{\Omega}(x)=\int_{\mathbb{R}^{N}}\rho_{\varepsilon}(y)\left[\chi_{\Omega}(x-y)-\chi_{\Omega}(x)\right]dy.
\end{align*}
By the previous inequality and claim 1, it follows that
\begin{align*}
	\int_{\mathbb{R}^{N}}\vert u_{\varepsilon}(x)-\chi_{\Omega}(x)\vert\gamma(x)dx&\leq C(\gamma,\Omega)\int_{\mathbb{R}^{N}}\vert u_{\varepsilon}(x)-\chi_{\Omega}(x)\vert dx\\
	&\leq C(\gamma,\Omega)\int_{\Omega}\rho_{\varepsilon}(y)\int_{\mathbb{R}^{N}}\left\vert\chi_{\Omega}(x-y)-\chi_{\Omega}(x)\right\vert dxdy\\
	&\leq C(\gamma,\Omega)\mathcal{H}^{N-1}(\partial\Omega)\int_{\mathbb{R}^{N}} \vert y\vert\rho_{\varepsilon}(y)dy\\
	&=\varepsilon C(\gamma,\Omega)\mathcal{H}^{N-1}(\partial\Omega)\int_{\mathbb{R}^{N}}\vert y\vert\rho(y)dy,
\end{align*}
where $C(\gamma,\Omega)=\sup\{\gamma(y); y\in\mathbb{R}^{N}, dist(y,\Omega)<1\}$. 

Thus, the claim 2 follows, and so the equality $(\ref{3.0.3.1})$. \qedc 

Now, we concern on the equality $(\ref{3.0.3.2})$. Taking $f\in C_{c}^{1}(\mathbb{R}^{N};\mathbb{R}^{N})$, we get 
\begin{align}\label{3.0.3.4}
	\int_{\mathbb{R}^{N}}u_{\varepsilon}(x)div\left(w(x)f(x)\right)dx=-\int_{\mathbb{R}^{N}}\langle\nabla u_{\varepsilon}(x),\omega(x)f(x)\rangle dx.
\end{align}
We then have
\begin{align*}
	\left\vert\int_{\mathbb{R}^{N}}u_{\varepsilon}(x)div\left(\omega(x)f(x)\right)dx\right\vert\leq\sup_{y\in\mathbb{R}^{N}}\vert f(y)\vert\int_{\mathbb{R}^{N}}\vert\nabla u_{\varepsilon}(x)\vert\omega(x)dx.
\end{align*}
 Taking the supremum over all $f\in C_{c}^{1}(\mathbb{R}^{N};\mathbb{R}^{N})$ satistying $\vert f\vert\leq 1$ on $\mathbb{R}^{N}$, we get
 \begin{align}\label{3.0.3.6}
 \int_{\partial\Omega}\omega(x)d\mathcal{H}^{N-1}\leq\liminf_{\varepsilon\to 0}\int_{\mathbb{R}^{N}}\vert\nabla u_{\varepsilon}(x)\vert\omega(x)dx.
 \end{align}   
  
For the proof of the reverse inequality, we consider $\delta>0$ arbitrary. By uniform continuity of $\omega$ on $\partial\Omega$,  there exists $\theta(\delta,\partial\Omega)>0$, that depends only on $\delta$ and $\partial\Omega$, such that
\begin{align*}
	\left\vert\omega(x+y)-\omega(x)\right\vert<\delta
\end{align*}
whenever $\vert y\vert<\theta(\delta,\partial\Omega)$.

It follows from equality $(\ref{3.0.3.4})$, divergence theorem and previous statement that
\begin{align}\label{3.0.3.7}
		\left\vert\int_{\mathbb{R}^{N}}\langle\nabla u_{\varepsilon}(x),f(x)\rangle \omega(x)dx\right\vert&=\left\vert\int_{\mathbb{R}^{N}}u_{\varepsilon}(x)div\left(w(x)f(x)\right)dx\right\vert\nonumber\\
		&=\left\vert\int_{\mathbb{R}^{N}}\rho_{\varepsilon}(y)\int_{\Omega}div\left(\omega(x+y)f(x+y)\right)dxdy\right\vert\nonumber\\
		&=\left\vert\int_{\mathbb{R}^{N}}\rho_{\varepsilon}(y)\int_{\partial\Omega}\langle f(x+y), \eta(x)\rangle\omega(x+y)d\mathcal{H}^{N-1}(x)dy\right\vert\nonumber\\
		&=\left\vert\int_{\mathbb{R}^{N}}\rho_{\varepsilon}(y)\int_{\partial\Omega}\langle f(x+y), \eta(x)\rangle\omega(x+y)d\mathcal{H}^{N-1}(x)dy\right\vert\nonumber\\
		&\leq\sup_{y\in\mathbb{R}^{N}}\vert f(y)\vert\left[\int_{\mathbb{R}^{N}}\rho_{\varepsilon}(y)\int_{\partial\Omega}\vert\omega(x+y)-w(x)\vert d\mathcal{H}^{N-1}(x)dy\right.\nonumber\\
		&+\left.\int_{\mathbb{R}^{N}}\rho_{\varepsilon}(y)\int_{\partial\Omega}\omega(x)d\mathcal{H}^{N-1}(x)dy\right]\nonumber\\
		&\leq\sup_{y\in\mathbb{R}^{N}}\vert f(y)\vert\left[\delta\mathcal{H}^{N-1}(\partial\Omega)+\int_{\partial\Omega}\omega(x)d\mathcal{H}^{N-1}(x)\right].
\end{align}
Here, $\eta$ denotes the outward unit normal vector with respect to $\Omega$, and $\varepsilon<\theta(\delta,\partial\Omega)$.

Applying the reverse Hölder inequality to the inequality (\ref{3.0.3.7}), we obtain
\begin{align}\label{3.0.3.8}
	\int_{\mathbb{R}^{N}}\vert\nabla u_{\varepsilon}(x)\vert\omega(x)dx\leq\delta\mathcal{H}^{N-1}(\partial\Omega)+\int_{\partial\Omega}\omega(x)\mathcal{H}^{N-1}(x), \, \ \text{for every} \, \  \varepsilon<\theta(\delta,\partial\Omega).
\end{align}
By inequalities $(\ref{3.0.3.6})$, and $(\ref{3.0.3.8})$, we get the equality $(\ref{3.0.3.2})$, and the proof of the lemma is complete.
\end{proof}

\begin{remark}\label{3.0.3.1.0} Given a Lipschitz bounded open set, in order to analyze the isoperimetric quotient
	\begin{align}\label{3.0.3.1.1}
		\dfrac{\displaystyle\int_{\partial\Omega}x^{A}d\mathcal{H}^{N-1}(x)}{\left[\displaystyle\int_{\Omega}x^{B}dx\right]^{\frac{N+a-1}{N+b}}},
	\end{align}
it is sufficient to consider $\Omega$ contained in $\mathbb{R}_{A}^{N}$, if $a-b\leq 1$. The strategy below is due to Cabr\'e and Ros-Oton, see \cite{Cabre-Ros-Oton}.
	
 We may assume, by symmetry, that $A=(a_{1}, \ldots, a_{k}, 0, \ldots, 0)$, where $a_{i}>0$ for every $i\in\{1,\ldots, k\}$ and  some $0\leq k\leq N$. We split the domain $\Omega$ in at most $2^{k}$ disjoint subdomains $\Omega_{j}$, $j\in\{1,\ldots,J \}$, where each subdomain $\Omega_{j}$ is contained in the cone $\{\varepsilon_{i}x_{i}>0, i\in\{1,\ldots, k\}\}$ for different $\varepsilon_{i}\in\{-1,1\}$. Thus, we have $\overline{\Omega}=\overline{\Omega}_{1}\cup\ldots\cup\overline{\Omega}_{J}$, 
\begin{align*}
P_{A}(\Omega)=\sum_{j=1}^{J}P_{A}(\Omega_{j}), \, \   \text{since the weight is zero on}\, \ \{x_{i}=0\}, \, \ \text{and}
\end{align*}

\begin{align*}
m_{B}(\Omega)=\sum_{j=1}^{J}m_{B}(\Omega_{j}).
\end{align*}
\end{remark}
Hence \begin{align}\label{3.0.3.1.2}
	\dfrac{P_{A}(\Omega)}{\left[m_{B}(\Omega)\right]^{\frac{N+a-1}{N+b}}}\geq\min\left\{\dfrac{P_{A}\left(\Omega_{j}\right)}{\left[m_{B}(\Omega_{j})\right]^{\frac{N+a-1}{N+b}}}; 1\leq j\leq J\right\}:=\dfrac{P_{A}(\Omega_{j_{0}})}{\left[m_{B}(\Omega_{j_{0}})\right]^{\frac{N+a-1}{N+b}}},
\end{align}
since $a-b\leq 1$, moreover, the equality in $(\ref{3.0.3.1.2})$ can hold when $a-b=1$. After reflections regarding the $x_{i}$-axis, where $i\in\{1,\ldots,k\}$, we can assume that $\Omega_{j_{0}}\subset\mathbb{R}_{A}^{N}$, since this movement changes neither the volume $m_{B}(\Omega_{j_{0}})$ nor the perimeter $P_{A}(\Omega_{j_{0}})$.

In addition to that, given a Lipschitz bounded open set $\Omega\subset\mathbb{R}_{A}^{N}$, the  isoperimetric quotient $(\ref{3.0.3.1.1})$ of $\Omega$ may be approximated on $\mathbb{R}_{A}^{N}$, namely there exists a  sequence of smooth open sets $\left(\Omega_{\delta}\right)_{\delta>0}$ with $\overline{\Omega}_{\delta}\subset\Omega\subset\mathbb{R}_{A}^{N}$ satisfying \begin{align*}
	\dfrac{\displaystyle\int_{\partial\Omega_{\delta}}x^{A}d\mathcal{H}^{N-1}(x)}{\left[\displaystyle\int_{\Omega_{\delta}}x^{B}dx\right]^{\frac{N+a-1}{N+b}}}\to	\dfrac{\displaystyle\int_{\partial\Omega}x^{A}d\mathcal{H}^{N-1}(x)}{\left[\displaystyle\int_{\Omega}x^{B}dx\right]^{\frac{N+a-1}{N+b}}} \, \ \text{as}\, \ \delta\to 0.
\end{align*}

\begin{lemma}\label{3.0.4.0} Let $A=(a_{1},\ldots, a_{N})$ and $B=(b_{1},\ldots, b_{N})$ be two nonnegative vectors. Assume that $a-b\leq 1$, then
	\begin{align*}
	C_{A,B,N}=\inf\{\mathcal{Q}_{A,B,N}(u): u\in C_{0}^{1}(\mathbb{R}^{N})\backslash\{0\}\}
	\end{align*}
\end{lemma}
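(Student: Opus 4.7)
The claim is the standard $p=1$ equivalence between an isoperimetric inequality and a Sobolev-type inequality. Set $q := (N+b)/(N+a-1)$; the hypothesis $a-b \leq 1$ is equivalent to $q \geq 1$, and this is the only place the assumption is used (it is needed to invoke Minkowski's integral inequality). I will prove the two inequalities $\inf \mathcal{Q}_{A,B,N} \leq C_{A,B,N}$ and $\inf \mathcal{Q}_{A,B,N} \geq C_{A,B,N}$ separately.

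For the direction $\inf \mathcal{Q}_{A,B,N} \leq C_{A,B,N}$, I fix an arbitrary smooth bounded open set $\Omega$ with $0 < m_B(\Omega) < \infty$ and, using Remark \ref{3.0.3.1.0}, reduce to the case $\overline{\Omega} \subset \mathbb{R}_A^N$ so that $x^A$ is locally Lipschitz on a neighborhood of $\overline{\Omega}$. Then Lemma \ref{3.0.3.0}, applied with $\omega = x^A$, $\gamma = x^B$, and $p = q \geq 1$, produces a sequence $u_\varepsilon \in C_c^\infty(\mathbb{R}^N)$ satisfying
$$\int_{\mathbb{R}^N} |u_\varepsilon|^q x^B \, dx \to m_B(\Omega) \quad \text{and} \quad \int_{\mathbb{R}^N} |\nabla u_\varepsilon| x^A \, dx \to P_A(\Omega),$$
hence $\mathcal{Q}_{A,B,N}(u_\varepsilon) \to \mathcal{R}_{A,B,N}(\Omega)$. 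Taking the infimum over $\Omega$ delivers the inequality.

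For $\inf \mathcal{Q}_{A,B,N} \geq C_{A,B,N}$, I fix $u \in C_c^1(\mathbb{R}^N) \setminus \{0\}$; since $|\nabla |u|| = |\nabla u|$ a.e. I may assume $u \geq 0$. The weighted coarea formula (Federer's coarea with integrand $x^A$) gives
$$\int_{\mathbb{R}^N} |\nabla u| x^A \, dx = \int_0^\infty P_A(\{u > t\}) \, dt.$$
By Sard's theorem, for a.e. $t > 0$ the set $\Omega_t := \{u > t\}$ is a bounded smooth open set, so by the definition of $C_{A,B,N}$,
$$P_A(\Omega_t) \geq C_{A,B,N}\, m_B(\Omega_t)^{1/q}.$$
On the other hand the layer-cake identity $u(x) = \int_0^\infty \chi_{\Omega_t}(x) \, dt$ combined with Minkowski's integral inequality (valid thanks to $q \geq 1$) yields
$$\left(\int u^q x^B \, dx\right)^{1/q} \leq \int_0^\infty \|\chi_{\Omega_t}\|_{L^q(x^B dx)} \, dt = \int_0^\infty m_B(\Omega_t)^{1/q} \, dt.$$
Chaining the three displays gives $\int |\nabla u| x^A \, dx \geq C_{A,B,N} (\int u^q x^B \, dx)^{1/q}$, that is $\mathcal{Q}_{A,B,N}(u) \geq C_{A,B,N}$.

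The only non-routine ingredient is the weighted coarea formula, which follows from the classical Federer coarea $\int \omega |\nabla u| \, dx = \int_{-\infty}^\infty \int_{\{u=t\}} \omega \, d\mathcal{H}^{N-1} \, dt$ together with the identification of the inner integral with $P_A(\{u>t\})$ at every regular value of $u$. The restriction $a-b \leq 1$ is used exclusively to secure $q \geq 1$, without which the $L^q$ functional would not satisfy the triangle inequality and the Minkowski step would break down; this explains why the hypothesis is essential to the present lemma.
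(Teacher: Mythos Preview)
Your proof is correct and follows essentially the same route as the paper: Lemma \ref{3.0.3.0} together with Remark \ref{3.0.3.1.0} for the direction $\inf \mathcal{Q}_{A,B,N}\le C_{A,B,N}$, and the weighted coarea formula combined with Minkowski's integral inequality (with exponent $q=\tfrac{N+b}{N+a-1}\ge 1$) for the reverse direction. One small remark: the hypothesis $a-b\le 1$ is used not only in the Minkowski step but also implicitly in Remark \ref{3.0.3.1.0}, where the subadditivity inequality $\sum m_B(\Omega_j)^{1/q}\ge \bigl(\sum m_B(\Omega_j)\bigr)^{1/q}$ needs $1/q\le 1$.
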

\begin{proof} Consider $\varepsilon>0$, then there exists a smooth bounded open set $\Omega$ such that $\overline{\Omega}\subset\mathbb{R}_{A}^{N}$, see remark \ref{3.0.3.1.0}, satisfying 
	\begin{align*}
	\mathcal{R}_{A,B,N}(\Omega)\leq C_{A,B,N}+\varepsilon.
	\end{align*} Applying the Lemma \ref{3.0.3.0} for the functions $\gamma(x)=x^{B}$, and $\omega(x)=x^{A}$, we then have
\begin{align*}
	C_{A,B,N}\geq\inf\{\mathcal{Q}_{A,B,N}(u): u\in C_{0}^{1}(\mathbb{R}^{N})\backslash\{0\}\}.
	\end{align*}
	
	To get the reverse inequality, without loss of generality, we may assume that $u$ is a nonnegative function. By coarea formula, we get 
	\begin{align}\label{3.0.4.1}
	\int_{\mathbb{R}^{N}} x^{A}\vert\nabla u\vert dx&=\int_{0}^{\infty}\int_{u=t}x^{A}\mathcal{H}^{N-1}(x)dt\nonumber\\
	&\geq C_{A,B,N}\int_{0}^{\infty}\left[\int_{u>t}x^{B}dx\right]^{\frac{N+a-1}{N+b}}dt.
	\end{align}
	
It follows from Minkowski's inequality for integrals and fubini's theorem that
	
	\begin{align}\label{3.0.4.2}
		\int_{\mathbb{R}^{N}}x^{B}\vert u\vert^{\frac{N+b}{N+a-1}}dx&=\int_{\mathbb{R}^{N}}x^{B}\left[\int_{0}^{\infty}\chi_{\{z>0; u(x)>z\}}(t)dt\right]^{\frac{N+b}{N+a-1}}dx\nonumber\\
		&=\int_{\mathbb{R}^{N}}x^{B}\left[\int_{0}^{\infty}\chi_{\{y\in\mathbb{R}^{N};u(y)>t\}}(x)dt\right]^{\frac{N+b}{N+a-1}}dx\nonumber\\
		&=\int_{\mathbb{R}^{N}}\left[\int_{0}^{\infty}\left(x^{B}\chi_{\{y\in\mathbb{R}^{N};u(y)>t\}}(x)\right)^{\frac{N+a-1}{N+b}}dt\right]^{\frac{N+b}{N+a-1}}dx\nonumber\\
		&\leq\left[\int_{0}^{\infty}\left(\int_{\mathbb{R}^{N}}x^{B}\chi_{\{y\in\mathbb{R}^{N};u(y)>t\}}(x)dx\right)^{\frac{N+a-1}{N+b}}dt\right]^{\frac{N+b}{N+a-1}}\nonumber\\
		&=\left[\int_{0}^{\infty}\left(\int_{u>t}x^{B}dx\right)^{\frac{N+a-1}{N+b}}dt\right]^{\frac{N+b}{N+a-1}}.
	\end{align}
	
	Hence, by $(\ref{3.0.4.1})$ and $(\ref{3.0.4.2})$, we then get
	\begin{align*}
			C_{A,B,N}\leq\dfrac{\displaystyle\int_{\mathbb{R}^{N}}\vert\nabla u\vert x^{A} dx}{\left[\displaystyle\int_{\mathbb{R}^{N}}\vert u\vert^{\frac{N+b}{N+a-1}}x^{B}dx\right]^{\frac{N+a-1}{N+b}}}.
	\end{align*}
This concludes the proof of the lemma.	
\end{proof}

\textbf{Proof of the Theorem \ref{0.0.0.9}} The part $(I)$ of the theorem follows from Lemmas \ref{3.0.1.0} and \ref{3.0.2.0}.

To prove the part $(II)$,  firstly we consider that $a-b<1$. Since the condition $(\ref{0.0.0.9.0})$ holds, we then get
\begin{align*}
	0\leq a_{i}-\frac{N+a-1}{N+b}b_{i}\leq\frac{N+a-1}{N+b}<1.
\end{align*}
Thus it follows from Theorem A and Lemma \ref{3.0.4.0} that \begin{align*}
C_{A,B,N}>0.
\end{align*}

We now assume that $a-b=1$. It follows from condition $(\ref{0.0.0.9.0})$ that
\begin{align*}
0\leq a_{i}-b_{i}\leq 1,
\end{align*}
for every $i\in\{1,\ldots, N\}$.

If $a_{i}-b_{i}<1$ for each $i\in\{1,\ldots, N\}$, then the theorem follows from Theorema A and Lemma \ref{3.0.4.0}. Otherwise, there exists $j\in\{1,\ldots, N\}$ such that $a_{j}-b_{j}=1$ and $a_{i}=b_{i}$ for every $i\in\{1,\ldots,N\}\backslash\{j\}$, then the result relies on the proof of the Theorem \ref{0.0.0.10} and Lemma \ref{3.0.4.0}.

{\section{Proof of the Theorem \ref{0.0.0.10}}}

The proof consists to show that if $a_i=b_i+1$, then $a_{i}= C_{A,B,N}$. To prove that \begin{align}\label{4.0.0.0}
a_{i}\leq C_{A,B,N}
\end{align}
we will use the Lemma \ref{3.0.4.0} and an idea contained in \cite{Castro}. 

Given $v\in C_{c}^{1}(\mathbb{R})$, $v\geq 0$, we have, integrating by parts that

\begin{align}\label{4.0.0.1}
\int_{\mathbb{R}}\vert y\vert^{b_{i}}v(y)dy&=\dfrac{1}{b_{i}+1}\int_{\mathbb{R}}\left(\vert y\vert^{b_{i}}y\right)'v(y)dy\nonumber\\
&=-\dfrac{1}{b_{i}+1}\int_{\mathbb{R}}\vert y\vert^{b_{i}}yv'(y)dy\nonumber\\
&\leq\dfrac{1}{a_{i}}\int_{\mathbb{R}}\vert y\vert^{a_{i}}\vert v'(y)\vert dy.	
\end{align}

We now apply the inequality (\ref{4.0.0.1}) to the function $v(y)=\overline{x}_{i}^{\overline{A}_{i}} u(x_{1},\ldots, x_{i-1},y, x_{i+1}, \ldots, x_{N})$ with $u\geq 0$, thus we then have
\begin{align*}
\int_{\mathbb{R}}\vert\vert y\vert^{b_{i}}\overline{x}_{i}^{\overline{A}_{i}} u(x_{1},\ldots x_{i-1},y,x_{i+1},\ldots, x_{N})\vert dy\leq\dfrac{1}{a_{i}}\int_{\mathbb{R}}\vert\vert y\vert^{a_{i}}\overline{x}_{i}^{\overline{A}_{i}}\partial_{y}\left(u(x_{1},\ldots,x_{i-1},y,x_{i+1},\ldots, x_{N})\right)\vert dy.
\end{align*}
Integrating with respect to the variables $x_{1},\ldots x_{i-1},x_{i+1},\ldots, x_{N}$, we obtain that
\begin{align}\label{4.0.0.2}
a_{i}\leq\dfrac{\displaystyle\int_{\mathbb{R}^{N}}\vert\nabla u(x)\vert x^{A} dx}{\displaystyle\int_{\mathbb{R}^{N}}\vert u(x)\vert x^{B} dx}.
\end{align}
Therefore, the inequality $(\ref{4.0.0.0})$ follows from Lemma \ref{3.0.4.0} and inequality $(\ref{4.0.0.2})$.

To prove the reverse inequality, we will use the proof of Lemma \ref{3.0.2.0}. Indeed, by the proof of Lemma \ref{3.0.2.0}, we get 

\begin{align*}
\dfrac{P_{A}(\Omega_{\varepsilon})}{\left[m_{B}(\Omega_{\varepsilon})\right]^{\frac{N+a-1}{N+b}}}&\leq\varepsilon^{a_{i}-\frac{N+a-1}{N+b}(b_{i}+1)}\dfrac{\frac{1}{(1+\varepsilon^{2})^{\frac{N+a-2}{2}}}\displaystyle\int_{B_{N-1}^{+}(1)}\overline{x}_{i}^{\overline{A}_{i}}\vert\overline{x}_{i}\vert^{a_{i}}d\overline{x}_{i}}{\left[\dfrac{1}{(b_{i}+1)(1+\varepsilon^{2})^{\frac{N+b}{2}}}\displaystyle\int_{B_{N-1}^{+}(1)}\overline{x}_{i}^{\overline{B}_{i}}\vert \overline{x}_{i}\vert^{b_{i}+1}d\overline{x}_{i}\right]^{\frac{N+a-1}{N+b}}}\\
&+O(\varepsilon^{a_{i}+1-\frac{N+a-1}{N+b}(b_{i}+1)})
+O(\varepsilon^{a_{i}+2-\frac{N+a-1}{N+b}(b_{i}+1)})\\
&=\dfrac{\frac{1}{(1+\varepsilon^{2})^{\frac{N+a-2}{2}}}\displaystyle\int_{B_{N-1}^{+}(1)}\overline{x}_{i}^{\overline{A}_{i}}\vert\overline{x}_{i}\vert^{a_{i}}d\overline{x}_{i}}{\dfrac{1}{(b_{i}+1)(1+\varepsilon^{2})^{\frac{N+b}{2}}}\displaystyle\int_{B_{N-1}^{+}(1)}\overline{x}_{i}^{\overline{B}_{i}}\vert\overline{x}_{i}\vert^{b_{i}+1}d\overline{x}_{i}}+O(\varepsilon)+O(\varepsilon^{2})\\
&=(b_{i}+1)(1+\varepsilon^{2})^{\frac{3}{2}}+O(\varepsilon)+O(\varepsilon^{2}),
\end{align*}
where $\Omega_{\varepsilon}$ is the same set as defined in Lemma \ref{3.0.2.0}. 
Therefore, \begin{align*}
\lim_{\varepsilon\to 0}\dfrac{P_{A}(\Omega_{\varepsilon})}{m_{B}(\Omega_{\varepsilon})}=a_{i}.
\end{align*} 
Which concludes the proof. \qed
\begin{remark} We consider again $A=(a_{1},\ldots, a_{N})$ and $B=(b_{1},\ldots, b_{N})$ two nonnegative vectors in $\mathbb{R}^{N}$. The case when $A=B$ was studied by Cabr\'{e} and Ros-Oton, and they proved that \begin{align*}
	\dfrac{P_{A}(B_{1}^{A})}{\left[m_{A}(B_{1}^{A})\right]^{\frac{N+a-1}{N+a}}}=C_{A,A,N},
	\end{align*}
where  $B_{1}^{A}:=B_{1}(0)\cap \mathbb{R}_{A}^{N}$.

The study on the existence of sets $\Omega$ in $\mathbb{R}^{N}$ that minimize the isoperimetric quotient 	
\begin{align*}
		\dfrac{P_{A}(\Omega)}{\left[m_{B}(\Omega)\right]^{\frac{N+a-1}{N+b}}}
	\end{align*}
is in preparation.
\end{remark}


\begin{thebibliography}{}
%
%


 
\bibitem{Alvino} Alvino, A., Brock, F., Chiacchio, F., Mercaldo, A., Posteraro, M.: Some isoperimetric inequalities on $\mathbb{R}^{N}$ with respect to weights $\left\vert x\right\vert^{\alpha}$, J. Math. Anal. Appl., 451, 280-318 (2017) 
 
\bibitem{Alvino2} Alvino, A., Brock, F., Chiacchio, F., Mercaldo, A., Posteraro, M.: On weighted isoperimetric inequalities with non-radial densities, Appl. Anal., 1-11 (2018)

\bibitem{Alvino3} Alvino, A., Nitsch, C.: A sharp isoperimetric inequality in the plane, J. Eu. Math. Soc, 13 (4), 185-206 (2011)

 
 \bibitem{Betta} Betta, M.F., Brock, F., Mercaldo, A., Posteraro, M.R.: A weighted isoperimetric inequality and applications to symmetrization, J. Inequal.  Appl., 4, 215-240 (1999)
 
 \bibitem{Betta1}  Betta, M.F., Brock, F., Mercaldo, A., Posteraro, M.R.:  weighted isoperimetric inequalities on $\mathbb{R}^{N}$ and applications to rearrangements, Math. Nachr. 281 (4), 215-240 (2008)  
 
 \bibitem{Boyer}Boyer, W., Brown, B., Chambers, G.,Loving, A., Tammen T.: Isoperimetric regions in
$\mathbb{R}^{N}$ with density $r^{p}$, arXiv:1504.01720v2.

\bibitem{Brock} Brock, F., Mercaldo, A., Posteraro, M. R.: On isoperimetric inequalities with respect to infinite measures, Rev. Mat. Iberoam, 29, 665-690 (2013)  
 
 \bibitem{Cabre1} Cabr\'e, X.: Isoperimetric, Sobolev, and eigenvalue inequalities via the Alexandroff-Bakelman-Pucci method: A survey, Chin. Ann. Math. Ser. B, 38, 201–214 (2017)
 
 \bibitem{Cabre2} Cabr\'e, X., Ros-Oton, X., and Serra, J.: Sharp isoperimetric inequalities via the ABP method, J. Eur. Math. Soc., 18, 2971-2998 (2016)
 
 \bibitem{Cabre-Ros-Oton} Cabr\'e, X., Ros-Oton, X.: Sobolev and isoperimetric inequalities with monomial weights, J. Differ. Equations, 255, 4312-4336 (2013)
  
 \bibitem{Caffarelli}  Caffarelli, L., Kohn R., Nirenberg, L.: First order interpolation inequalities with weights, Compos. Math., 53, 259-275 (1984) 
 
 \bibitem{CARROLL} Carroll, C., Jacob, A., Quinn, C., Walters, R.: The isoperimetric problem on planes with density, Bull. Aust. Math. Soc., 78, 177-197 (2008)
 
 \bibitem{Castro} Castro, H.: Hardy-Sobolev-type inequalities with monomial weights, Ann. Mat. Pur. Appl., 196, 579-598 (2017)
 
 \bibitem{Csato} Csat\'o, G.: An isoperimetric problem with density and the Hardy Sobolev inequality in $\mathbb{R}^{2}$, Differ. Integral Equ., 28, 971-988 (2015)
 
 \bibitem{Csato1} Csat\'o, G.: On the isoperimetric problem with perimeter density $r^{p}$, Commun. Pur. and Appl. Anal., 17, 2729-2749 (2018) 
 

\bibitem{Dahlberg} Dahlberg, J., Dubbs, A., Newkirk, E., Tran, H.: Isoperimetric regions in the plane with density $r^{p}$, New York J. Math., 16, 31-51 (2010) 

\bibitem{Di Giosia} Di Giosia, L., Habib, J., Kenigsberg, L., Pittman, D., Zhu, W.: Balls Isoperimetric in $\mathbb{R}^{N}$
with Volume and Perimeter Densities $r^{m}$ and $r^{k}$, arXiv:1610.05830v1

\bibitem{Fusco} Fusco, N., Maggi, F., Pratelli, A.: On the isoperimetric problem with respect to a mixed Euclidean–Gaussian density, J. Funct. Anal., 260, 3678-3717 (2011)

\bibitem{Rosales} Rosales, C., Ca\~nete, A., Bayle, V., Morgan, F.: On the isoperimetric problem in Euclidean space with density, Calc. Var. Partial Diff. Equations., 31, 27-46 (2008)

\bibitem{Talenti1} Talenti, G.: The standard isoperimetric theorem, in: Handbook of Convex Geometry, Vol. A, 73-123 (1993)








\end{thebibliography}
\end{document}